\newcommand{\nosemic}{\renewcommand{\@endalgocfline}{\relax}}
\newcommand{\dosemic}{\renewcommand{\@endalgocfline}{\algocf@endline}}
\let\oldnl\nl
\newcommand{\nonl}{\renewcommand{\nl}{\let\nl\oldnl}}
\newcommand{\oset}[3][0ex]{%
  \mathrel{\mathop{#3}\limits^{
    \vbox to#1{\kern-2\ex@
    \hbox{$\scriptstyle#2$}\vss}}}}
\newcommand{\optimal}[1]{\oset{\scalebox{.5}{$\star$}}{#1}\!}
\LetLtxMacro\orgvdots\vdots
\LetLtxMacro\orgddots\ddots
\newtheorem{problem*}{Problem}
\newtheorem{theorem}{Theorem}
\newtheorem*{example*}{Example}
\DeclareMathOperator*{\argmax}{argmax}
\DeclareMathOperator*{\argmin}{argmin}
\DeclareMathOperator*{\minimize}{minimize}
\newcommand*{\defeq}{\stackrel{\text{def}}{=}}
\newcommand{\cC}{\mathcal{C}}
\newcommand{\cI}{\mathcal{I}} 
 \newcommand{\cN}{\mathcal{N}}
\newcommand{\cO}{\mathcal{O}}
 \newcommand{\cY}{\mathcal{Y}}
 \newcommand{\cX}{\mathcal{X}}
 \newcommand{\RR}{\mathbb{R}}
\newcommand{\st}{\text{subject to}}
\newcommand\hugP[1]{\left(#1\right)}
\title{Learning Hard Optimization Problems:\\A Data Generation Perspective}
\author{James Kotary\\
  Syracuse University\\
  \texttt{jkotary@syr.edu} \\
  \And
  Ferdinando Fioretto\\
  Syracuse University\\
  \texttt{ffiorett@syr.edu} \\
  \And
  Pascal Van Hentenryck\\
  Georgia Institute of Technology\\
  \texttt{pvh@isye.gatech.edu} \\
}
\begin{document}

\maketitle\sloppy\allowdisplaybreaks

\begin{abstract}
Optimization problems are ubiquitous in our societies and are present 
in almost every segment of the economy. Most of these optimization 
problems are NP-hard and computationally demanding, often requiring 
approximate solutions for large-scale instances. Machine learning
frameworks that learn to approximate solutions to such hard
optimization problems are a potentially promising avenue to address 
these difficulties, particularly when many closely related problem 
instances must be solved repeatedly. Supervised learning frameworks 
can train a model using the outputs of pre-solved
instances. However, when the outputs are themselves approximations, 
when the optimization problem has symmetric solutions, and/or when 
the solver uses randomization, solutions to closely related instances 
may  exhibit large differences and the learning task can become 
inherently more difficult. This paper demonstrates this critical challenge, connects 
the volatility of the training data to the ability of a model to 
approximate it, and proposes a method for producing (exact or 
approximate) solutions to optimization problems that are more amenable 
to supervised learning tasks.  The effectiveness of the method is 
tested on hard non-linear nonconvex and discrete combinatorial problems.
\end{abstract}

\section{Introduction} 
\label{sec:introduction}

Constrained optimization (CO) is in daily use in our society, with
applications ranging from supply chains and logistics, to electricity
grids, organ exchanges, marketing campaigns, and manufacturing to
name only a few. 
Two classes of hard optimization problems of particular interest in many
fields are (1) \emph{combinatorial optimization problems} and
(2) \emph{nonlinear constrained problems}. Combinatorial optimization
problems are characterized by discrete search spaces and have
solutions that are combinatorial in nature, involving for instance,
the selection of subsets or permutations, and the sequencing or
scheduling of tasks. Nonlinear constrained problems may have
continuous search spaces but are often characterized by highly
nonlinear constraints, such as those arising in electrical power
systems whose applications must capture physical laws such as Ohm’s
law and Kirchhoff’s law in addition to  engineering operational
constraints. Such CO problems are often NP-Hard and may be
computationally challenging in practice, especially for large-scale
instances.  

While the AI and Operational Research communities have contributed
fundamental advances in optimization in the last decades, the
complexity of some problems often prevents them from being adopted in
contexts where many instances must be solved over a long-term
horizon (e.g., multi-year planning studies) or when solutions must be
produced under time constraints. Fortunately, in many practical
cases, including the scheduling and energy problems motivating this
work, one is interested in solving many problem instances sharing
similar patterns. Therefore, the application of deep learning methods
to aid the solving of computationally challenging constrained
optimization problems appears to be a natural approach and has gained
traction in the nascent area at the intersection between CO and
ML \cite{bengio2020machine,kotary2021end,vesselinova2020learning}.
In particular, supervised learning frameworks can train a model using 
pre-solved CO instances and their solutions. However, learning the 
underlying combinatorial structure of the problem or learning approximations of optimization problems with hard physical and engineering constraints may be an extremely difficult task. While much of the recent research at 
the intersection of CO and ML has focused on learning good CO 
approximations in jointly training prediction and optimization models 
\cite{balcan2018learning,khalil2016learning,nair2020solving,nowak2018revised,vinyals2017pointer} 
and incorporating optimization algorithms into differentiable 
systems \cite{amos2019optnet,vlastelica2020differentiation,wilder2018melding,mandi2019smart}, 
learning the combinatorial structure of CO problems remains an elusive task.

Beside the difficulty of handling hard constraints, which will almost
always exhibit some violations, two interesting challenges have
emerged: the presence of multiple, often symmetric, solutions, and
the learning of approximate solution methods. The first challenge
recognizes that an optimization problem may not have a
optimal solution.
This challenge is illustrated in Figure \ref{fig:motivation_intro}, 
\begin{wrapfigure}[12]{r}{150pt}
  \vspace{-12pt}
  \centering
  \includegraphics[width=0.8\linewidth]{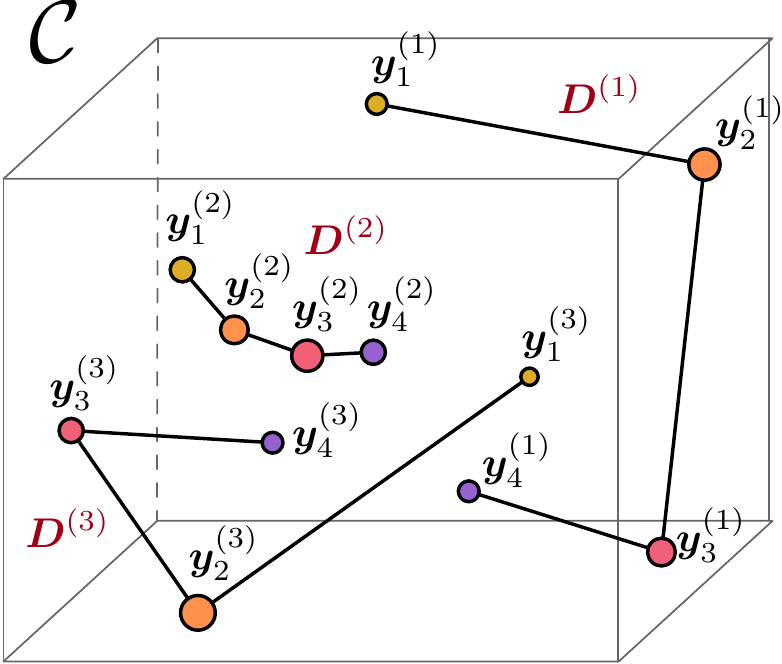}
  \caption{\small Co-optimal datasets due to symmetries. \label{fig:motivation_intro}}
\end{wrapfigure}
where the various $\bm{y}^{(i)}$ represent \emph{optimal} solutions 
to CO instances $\bm{x}^{(i)}$ and $\cC$ the feasibility space.
As a result,  a combinatorial number of possible datasets may be generated. While equally valid as optimal solutions, some sets follow patterns which are more meaningful and recognizable. Symmetry breaking is of course a major area of combinatorial optimization and may alleviate some of these issues. But different instances may not break symmetries in the same fashion, thus creating datasets that are harder to learn.

The second challenge comes from realities in the application domain.
Because of time constraints, the solution technique may return a
sub-optimal solution. Moreover, modern combinatorial optimization
techniques often use randomization and large neighborhood search to
produce high-quality solutions quickly. Although these are widely
successful, different runs for the same, or similar, instances may
produce radically different solutions. As a result, learning the
solutions returned by these approximations may be inherently more
difficult. These effects may be viewed as a source of noise that
obscures the relationships between training data and their target
outputs. Although this does not raise issues for optimization
systems, it creates challenging learning tasks.

This paper demonstrates these relations, connects the volatility of
the training data to the ability of a model to approximate it, and
proposes a method for producing (exact or approximate) solutions to
optimization problems that are more amenable to supervised learning
tasks. More concretely, the paper makes the following
contributions: 
\begin{enumerate}[leftmargin=*, parsep=0pt, itemsep=2pt, topsep=-4pt]
  \item It shows that the existence of co-optimal or approximated 
  solutions obtained by solving hard CO problems to construct training 
  datasets challenges the learnability of the task.
  
  \item To overcome this limitation, it introduces the problem of 
  optimal dataset design, which is cast as a bilevel optimization 
  problem. The optimal dataset design problem is motivated using
  theoretical insights on the approximation of functions by neural 
  networks, relating the properties of a function describing a training 
  dataset to the model capacity required to represent it.
  
  \item It introduces a tractable algorithm for the generation of 
  datasets that are amenable to learning, and empirical demonstration 
  of marked improvements to the accuracy of trained models, as well as 
  the ability to satisfy constraints at inference time. 

  \item Finally, it provides state-of-the-art accuracy results at vastly 
  enhanced computational runtime on learning two challenging optimization 
  problems: Job Shop Scheduling problems and Optimal Power Flow problems 
  for energy networks. 
\end{enumerate}
To the best of the authors knowledge this work is the first to highlight 
the issue of learnability in the face of co-optimal or approximate 
solutions obtained to generate training data for learning to approximate 
hard CO problems. 
{\em The observations raised in this work may result in a broader impact 
as, in addition to approximating hard optimization problems, the optimal 
dataset generation strategy introduced in this paper may be useful to 
the line of work on integrating CO as differentiable layers for predictive 
and prescriptive analytics, as well as for physics constrained learning 
problems such as when approximating solutions to systems of 
partial differential equation.}

\section{Related work}
\label{sec:related_wrok}

The integration of CO models into ML pipelines to meld prediction and decision models has recently seen the introduction of several nonoverlapping approaches, surveyed by \citet{kotary2021end}. 
The application of ML to boost performance in traditional search methods through branching rules and enhanced heuristics is broadly overviewed by \citet{bengio2020machine}. Motivated also by the need for fast approximations to combinatorial optimization problems, the training of surrogate models via both supervised and reinforcement learning is an active area for which a thorough review is provided by \citet{vesselinova2020learning}.  
Designing surrogate models with the purpose to approximate hard CO
problem has been studied in a number of works, including
\cite{borghesi2020combining,detassis2020teaching,Fioretto:AAAI-20}. 
An important aspect for the learned surrogate
models is the prediction of solutions that satisfy the problem
constraints. While this is a very difficult task in general, several methodologies
have been devised. In particular, Lagrangian loss functions have been
used for encouraging constraint satisfaction in several applications
of deep learning, including fairness
\cite{tran:20_fairdp} and energy problems \cite{Fioretto:AAAI-20}. 
Other methods iteratively modify training labels to encourage satifaction of 
constraints during training \cite{detassis2020teaching}.

This work focuses on an orthogonal direction with respect to the
literature reviewed above. Rather than devising a new methodology for
effectively producing a surrogate model that approximate some hard
optimization problem, it studies the machine learning task from a data generation
perspective. It shows that the co-optimality and symmetries of a hard
CO problem may be viewed as a source of noise that obscures the
relationships between training data and the target outputs and
proposes an optimal data generation approach to mitigate these
important issues.

\section{Preliminaries}
\label{sec:prelim}

A constrained optimization (CO) problem poses the task of minimizing 
an \emph{objective function} \mbox{$f:\cY \times \cX \to \mathbb{R}_+$} 
of one or more variables $\bm{y} \in \cY \subseteq \mathbb{R}^n$, 
subject to the condition that a set of \emph{constraints} 
$\mathcal{C}_{\bm{x}}$ are satisfied between the variables and where
$\bm{x} \in\cX \subseteq \mathbb{R}^m$ denotes a vector of input data 
that specifies the problem instance: 
\begin{equation}
\label{eq:opt}
\cO(\bm{x}) = \argmin_{\bm{y}} f(\bm{y}, \bm{x}) \;\; 
  \text{subject to:} \;\;
  \bm{y} \in \mathcal{C}_{\bm{x}}.
\end{equation}
An assignment of values $\bm{y}$ which satisfies $\mathcal{C}_{\bm{x}}$ 
is called a \emph{feasible solution}; if, additionally 
$f(\bm{y}, \bm{x}) \leq f(\bm{w}, \bm{x})$ for all feasible
 $\bm{w}$, it is called an \emph{optimal solution}.

A particularly common constraint set arising in practical problems takes 
the form $ {\cal{C}}= \{ \bm{y}  \;:\; \bm{A} \bm{y} \leq \bm{b} \}$,
where $\bm{A} \in \mathbb{R}^{m \times n}$ and $\bm{b} \in \mathbb{R}^m$. 
In this case, ${\cal{C}}$ is a convex set. If the objective $f$ is an 
affine function, the problem is referred to as \emph{linear program} (LP). 
If, in addition, some subset of a problem's variables are required to 
take integer values, it is called \emph{mixed integer program} (MIP). 
While LPs with convex objectives belong to the class of convex problems, 
and can be solved efficiently with strong theoretical guarantees on the 
existence and uniqueness of solutions \citep{boydconvex},  
the introduction of integral constraints ($\bm{y} \in \mathbb{N}^n$) 
results in a much more difficult problem. The feasible set in MIP 
consists of distinct points in $\bm{y} \in \mathbb{R}^n$, not only 
nonconvex but also disjoint, and the resulting problem is, in general, 
NP-Hard. 
Finally, nonlinear programs (NLPs) are optimization problems where some 
of the constraints or the objective function are nonlinear. Many NLPs 
are nonconvex and can not be efficiently solved \citep{nocedal2006numerical}.

The methodology introduced in this paper is illustrated on hard MIP 
and nonlinear program instances.

\section{Problem setting and goals}

This paper focuses on learning approximate solutions to problem 
\eqref{eq:opt} via supervised learning. The task considers datasets 
$\bm{\chi} = \{(\bm{x}^{(i)}, \bm{y}^{(i)})\}_{i=1}^N$ 
consisting of $N$ data points with $\bm{x}^{(i)} \in \cX$ being a vector 
of input data, as defined in equation \eqref{eq:opt}, and
$\bm{y}^{(i)} \in \cO(\bm{x}^{(i)})$ being a solution of the optimization 
task. A desirable, but not always achievable, property is for the solutions
$\bm{y}^{(i)}$ to be optimal.

The goal is to learn a model $f_\theta : \mathcal{X} \to \mathcal{Y}$, 
where $\theta$ is a vector of real-valued parameters, and whose quality 
is measured in terms of a nonnegative, and assumed differentiable, 
\emph{loss function} $\ell: \mathcal{Y} \times \mathcal{Y} \to \mathbb{R}_+$.  
The learning task minimizes the empirical risk function (ERM):
\begin{equation}
\label{eq:erm}
    \min_\theta J(f_\theta; \bm{\chi}) = 
    \frac{1}{N} \sum_{i=1}^N \ell(f_\theta(\bm{x}^{(i)}), \bm{y}^{(i)}),
\end{equation}
with the desired goal that the predictions also satisfy the problem constraints: $f_\theta(\bm{x}^{(i)}) \in \mathcal{C}_{\bm{x}}$.

While the above task is difficult to achieve due to the presence of constraints, 
the paper adopts a Lagrangian approach \citep{fioretto2020lagrangian}, 
which has been shown successful in learning constrained representations, 
along with projection operators, commonly applied in constrained optimization 
to ensure constraint satisfaction of an assignment. 
For a given point $\hat{\bm{y}}$, e.g., representing the model prediction, 
a projection operator $\pi_{\cC}(\hat{\bm{y}})$ finds the closest feasible 
point $\bm{y} \in \cC$ to $\hat{\bm{y}}$ under a $p$-norm: 
\[ \pi_{\cC}(\hat{\bm{y}}) \defeq
  \argmin_{\bm{y}} \| \bm{y} - \hat{\bm{y}}\|_p \ \;\; 
  \text{subject to:} \;\; \bm{y} \in \mathcal{C}.
\]
The full description of the Lagrangian based approach and the 
projection method adopted is delegated to the Appendix \ref{app:lagrangian}.

\section{Challenges in learning hard combinatorial problems}
\label{sec:challenges}

\begin{figure}[t]
\centering
\includegraphics[width=0.3\linewidth]{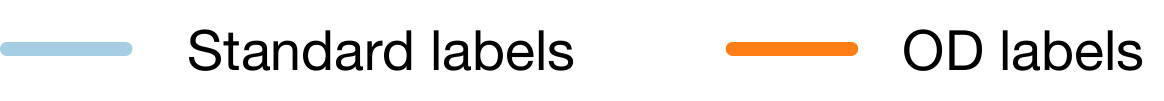}\\
\vspace{-6pt}
\includegraphics[height=80pt]{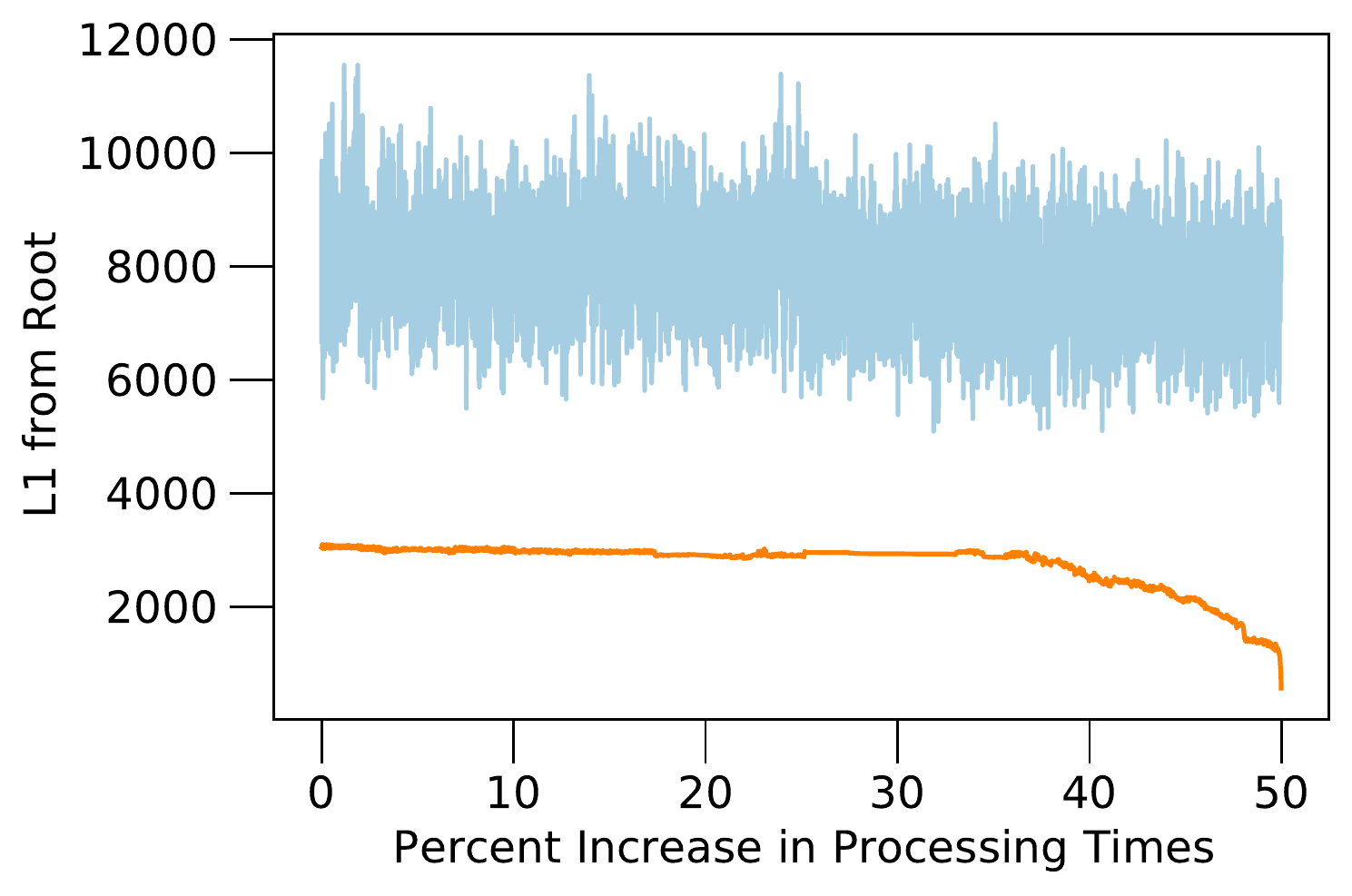}
\hfill
\includegraphics[height=80pt]{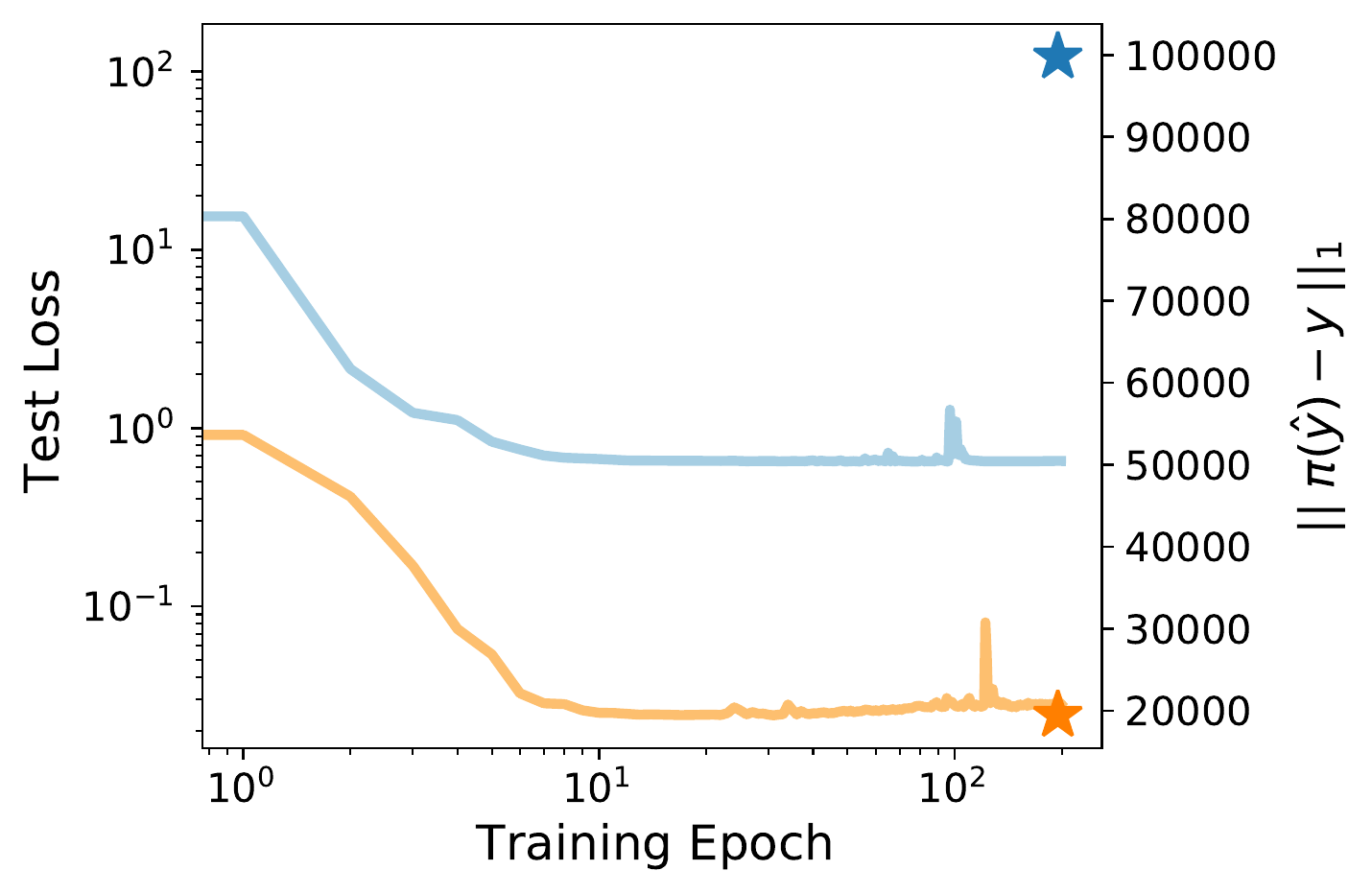}
\hfill
\includegraphics[height=85pt]{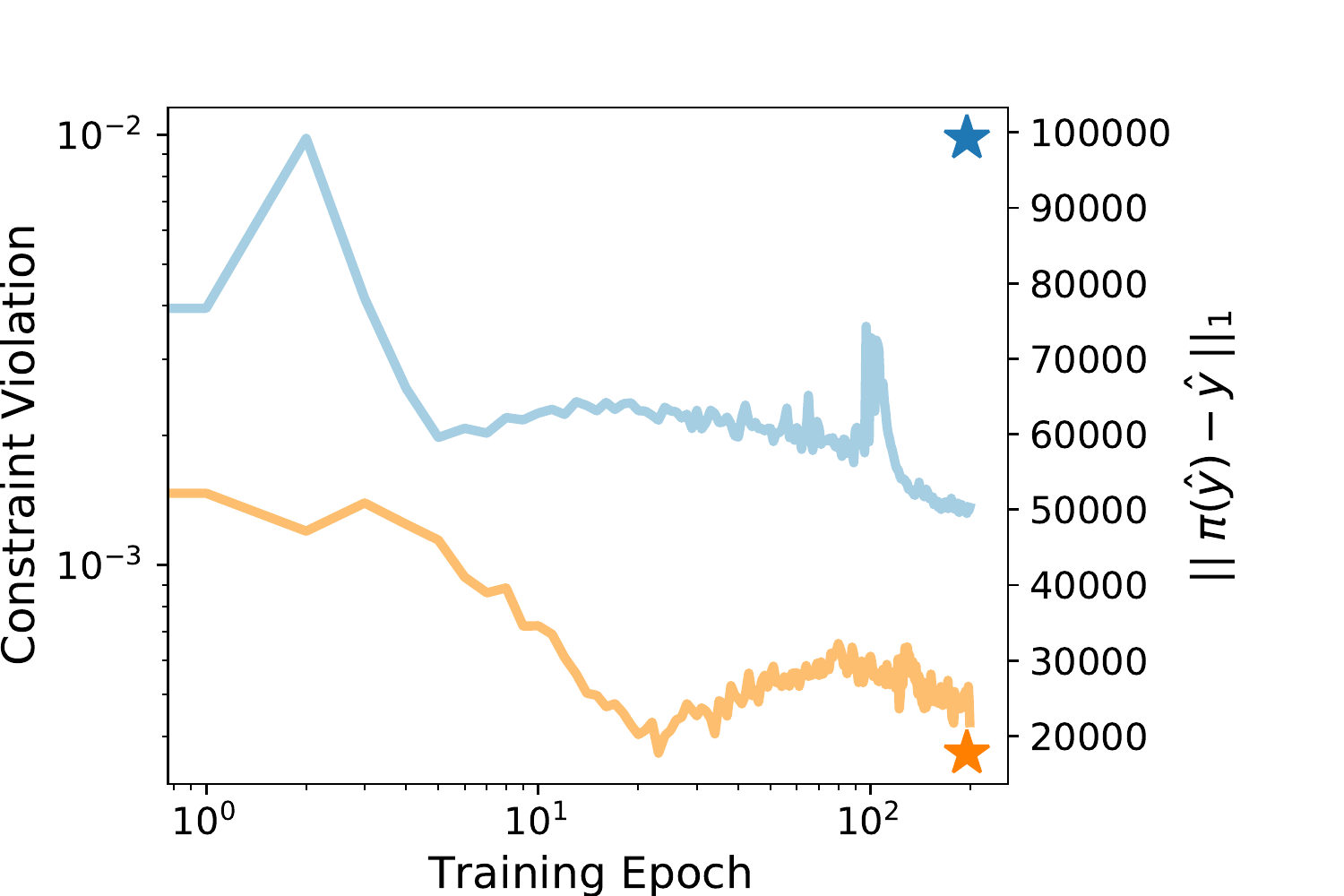}
{\caption{Dataset solution $\bm{y}^{(i)}$ comparison: 
L1 Distance from a reference solution (left); 
Test loss (center); and 
Constraint violations (right). 
}
\label{fig:challenge}}
\end{figure}
One of the challenges arising in this area comes from the recognition that a problem instance may admit a variety of disparate optimal solutions for each input $\bm{x}$. To illustrate this challenge, the paper uses a set of scheduling instances that differ only in the time required to process tasks on some machine. A standard approach to the generation of dataset in this context would consist in solving each instance independently using some SoTA optimization solver. However, this may create some significant issues that are illustrated in Figure \ref{fig:challenge} (more details on the problem are provided in Section \ref{sec:jss}). The blue curve
in Figure \ref{fig:challenge} (left) illustrates the behavior of this natural approach. In the figure, the processing times in the instances increase from left to right and the blue curve represents the $L_1$-distance between the obtained solution to each instance (i.e., the start times of the tasks) and a reference optimal solution for some instance. The volatile curve shows that meaningful patterns 
can be lost, including the important relationship between an increase in processing times and the resulting solutions. 
Figure \ref{fig:challenge} (center) shows that, while the solution patterns induced by the target labels appear volatile, the ERM  problem appears well behaved, in the face of minimizing the test loss. 
However, when training loss converges, accuracy (measured as the distance 
between the projection of the prediction $\pi_{\cC}(\hat{\bm{y}})$ and the real label $\bm{y}$) remains poor in models trained on the such data (blue star).  Figure \ref{fig:challenge} (right) shows the average magnitude of the constraints violation during training, corresponding to the two target solution sets of Figure \ref{fig:challenge} (left), along with a comparison of the objective of the projection operator applied to the prediction: $\| \pi_{\cC}( \hat{\bm{y}} ) - \hat{\bm{y}} \|$. It is worth emphasizing that these volatility issues are further exacerbated when time constraints prevent the solver from obtaining optimal solutions. Moreover, similar patterns can also be observed for the data generated while solving optimal power flow instances that exhibit symmetries. 

\begin{wrapfigure}[8]{r}{130pt}
\vspace{-28pt}
\includegraphics[width=\linewidth]{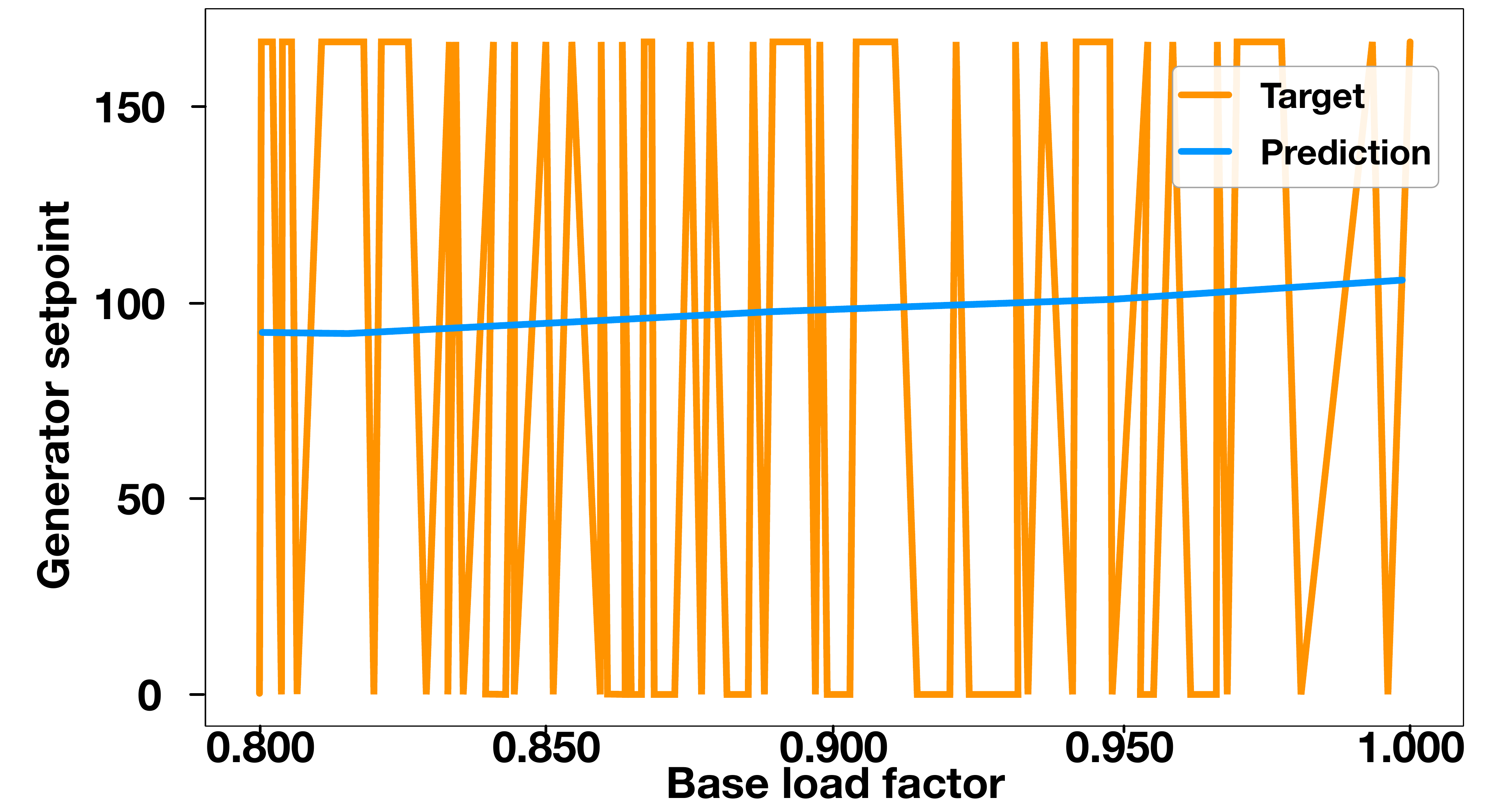}
\vspace{-18pt}
\caption{\small Approximating highly volatile function results in 
low-variance models. \label{fig:low_var}}
\end{wrapfigure}
Additionally, extensive observations collected on the motivating 
applications of the paper show that, even when the model complexity 
(i.e., the dimensionality of the model parameters 
$\theta$) is increased arbitrarily, the resulting learned models tend to 
have low-variance. This is illustrated in Figure 
\ref{fig:low_var}, where the orange and blue curves depict, respectively, 
a function interpolating the training labels and the associated learned 
solutions. 

The goal of this paper is to construct datasets that are well-suited
for learning the optimal (or near-optimal) solutions to optimization
problems.  The benefits of such an approach is illustrated by the
orange curves and stars in Figure \ref{fig:challenge}, which were 
obtained using the data generation methodology proposed in this paper. 
They considered the same instances and obtained (different) optimal 
solutions, but exhibit much enhanced behavior on all metrics. 

\section{Theoretical justification of the data generation}
\label{sec:theory}

Although the data generation strategy is necessarily heuristic, it
relies on key theoretical insights on the nature of optimization
problems and the representation capabilities on neural networks. This
section reviews these insights.

First, observe that, as illustrated in the motivating Figure 
\ref{fig:low_var}, the solution trajectory associated with the problem
instances on various input parameters can often be naturally
approximated by piecewise linear functions. This approximation is in
fact exact for linear programs when the inputs capture incremental
changes to the objective coefficients or the right-hand side of the
constraints. 
Additionally, ReLU neural networks, used in this paper to approximate
the optimization solutions, have the ability to capture piecewise
linear functions \cite{huang2020relu}. While these models are thus 
compatible with the task of predicting the solutions
of an optimization problem, the model capacity required to represent
a target piecewise linear function exactly depends directly on the
number of constituent pieces.

\begin{theorem}[Model Capacity \citep{arora2016understanding}]
Let $f : \RR^d \to \RR$ be a piecewise linear function with $p$ pieces. If $f$ is represented by a ReLU network with depth $k + 1$, then it must have size at least $\frac{1}{2}k p^{\frac{1}{k}}-1$. Conversely, any piecewise linear function $f$ that is represented by a ReLU network of depth $k + 1$ and size at most $s$, can have at
most $\left( \frac{2s}{k} \right)^k$ pieces.
\label{arora_lemma_D6}
\end{theorem}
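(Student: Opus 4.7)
The plan is to prove both directions by analyzing how the number of linear pieces of the function computed by a ReLU network evolves layer by layer, and then to optimize the widths via an AM-GM style argument. The two statements are contrapositives of each other up to the conversion between a piece-count upper bound and a size lower bound, so the real content is the converse direction: bounding the number of pieces from above in terms of depth $k+1$ and total size $s$.

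First I would set up the key inductive lemma: if a piecewise linear function $g : \RR^d \to \RR$ has at most $q$ affine pieces, then applying an additional layer consisting of $w$ ReLU units followed by an affine combination produces a piecewise linear function with at most $(w+1)q$ pieces (or more precisely, that each ReLU unit can at most double the pieces along any one-dimensional slice, and $w$ units in parallel contribute additively to the kink set while the affine readout preserves piece count). The proof of this lemma is geometric: each ReLU introduces a single hyperplane on which its argument vanishes, and such a hyperplane can subdivide each existing linear region into at most two sub-regions. Iterating this bound through the $k$ hidden layers of widths $n_1, \ldots, n_k$ yields $p \le \prod_{i=1}^{k}(n_i + 1)$.

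Second, I would optimize over width allocations. Under the constraint $\sum_i n_i = s$ (the size of the network), the product $\prod_i (n_i + 1)$ is maximized, by AM-GM, when all widths are equal, giving
\[
p \;\le\; \left( \tfrac{s}{k} + 1 \right)^{k} \;\le\; \left( \tfrac{2s}{k} \right)^{k},
\]
where the final inequality uses $s \ge k$ (otherwise some layer would be empty and the piece count is trivially controlled). This establishes the second assertion. Solving the same inequality for $s$ yields $s \ge \tfrac{k}{2} p^{1/k}$, and a careful accounting of the bias terms and the $+1$ in the per-layer bound (which account for how the output affine map is counted into the ``size'') tightens this to the stated $\tfrac{1}{2} k p^{1/k} - 1$, giving the first assertion.

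The main obstacle is the inductive lemma on how ReLU composition multiplies pieces: one must be careful that a single ReLU neuron does not increase the piece count by an unbounded factor when composed with a function that already has many kinks. The clean way is to track, for each linear region of the input partition, how the affine pre-activation restricted to that region splits into at most two sub-regions along the zero set of the pre-activation, and then to argue that $w$ ReLUs in parallel produce a common refinement whose complexity is bounded by a product over units. The constants in the final statement then follow by carefully bookkeeping the contribution of the final linear output layer and the $+1$ slack absorbed into the factor $2$.
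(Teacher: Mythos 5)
The paper itself offers no proof of this statement: it is imported verbatim (as ``Theorem 2.3 / Lemma D.6'') from \citet{arora2016understanding}, so there is no in-paper argument to compare against. Your sketch essentially reconstructs the proof given in the appendix of that source: the per-layer multiplicative bound on piece counts, the product bound $p \le \prod_{i=1}^{k}(n_i+1)$ over hidden-layer widths, the AM--GM optimization to $(s/k+1)^k \le (2s/k)^k$ using $s \ge k$, and the observation that the size lower bound is just the inversion of the piece-count upper bound. That chain of reasoning is sound, and your instinct that the ``real content'' is the converse direction is right. One small slip: you do not need any ``careful accounting'' to reach $\frac{1}{2}kp^{1/k}-1$ from $s \ge \frac{k}{2}p^{1/k}$ --- the stated bound is \emph{weaker} (the $-1$ is slack), so it follows a fortiori; describing this as a tightening is backwards.

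The one genuine gap concerns the domain. Your key lemma --- that $w$ parallel ReLU units refine each existing linear region into at most $w+1$ sub-regions, so a layer multiplies the piece count by at most $w+1$ --- is correct only for one-dimensional input, where each unit contributes at most one new breakpoint per interval. For $f:\RR^d\to\RR$ with $d\ge 2$, as the theorem is transcribed in this paper, a single layer of $w$ units can partition one region into as many as $\sum_{j=0}^{d}\binom{w}{j}$ cells, which already exceeds $(2s/k)^k$ for $k=1$ and moderate $w$; the bound $(w+1)q$, and hence the stated conclusion, fails if ``pieces'' means maximal affine regions in $\RR^d$. The original result in \citet{arora2016understanding} is stated for $f:\RR\to\RR$, and your parenthetical ``along any one-dimensional slice'' is exactly the correct scope --- but you should make that restriction explicit (either take $d=1$, or count pieces along affine lines), because as written your inductive lemma and the theorem's $\RR^d$ hypothesis are incompatible.
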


The solution trajectories may be significantly different depending on how the data is generated. Hence, the more volatile the trajectory, the harder it will be to learn. Moreover, for a network of fixed size, the more volatile the trajectory, the larger the approximation error will be in general. The data generation proposed in this paper will aim at generating solution trajectories that are approximated by small piecewise linear functions. The following theorem bounds the approximation error when using continuous piecewise linear functions: it connects the approximation errors of a piecewise linear function with the \emph{total variation in its slopes}. 

\begin{theorem}
\label{thm_james}
Suppose a piecewise linear function $f_{p'}$,  with $p'$ pieces each of width $h_k$ for $k \!\in\! [p']$, is used to approximate a piecewise linear $f_p$ with $p$ pieces, where $p' \!\leq\! p$. Then the approximation error 
$$\|f_p - f_{p'} \|_1 \leq \frac{1}{2}   h_{\max}^2  \sum_{1 \leq k \leq p} | L_{k+1} - L_k |,$$
holds where $L_k$ is the slope of $f_p$ on piece $k$ and $h_{\max}$ is the maximum width of all pieces. 
\end{theorem}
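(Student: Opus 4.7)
The plan is to construct the approximant $f_{p'}$ as the continuous piecewise linear secant interpolant of $f_p$ at the endpoints of its $p'$ pieces, and then bound the $L^1$ error on each piece using the additive decomposition of a piecewise linear function into an affine term plus hinge functions. Concretely, on each piece $[a_k, a_k + h_k]$ I would take $f_{p'}$ to be the secant line $\ell_k$ of $f_p$ over this interval, so that $f_p - f_{p'}$ vanishes at every piece endpoint.

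Fixing a piece index $k$, let $\{t_j\}_{j \in K_k}$ enumerate the kinks of $f_p$ lying inside $(a_k, a_k + h_k)$, with slope jumps $\delta_j = L_{j+1} - L_j$. Writing $f_p$ on this piece as an affine function plus $\sum_{j \in K_k} \delta_j (x - t_j)_+$ and using that the secant of an affine function is itself, the residual factorizes as
$$f_p(x) - \ell_k(x) = \sum_{j \in K_k} \delta_j \bigl[(x - t_j)_+ - s_{j,k}(x)\bigr],$$
where $s_{j,k}$ is the secant of the single hinge $(x - t_j)_+$ over $[a_k, a_k + h_k]$. The triangle inequality then reduces the per-piece bound to controlling the $L^1$ norm of each single-hinge residual.

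The one concrete calculation is the single-hinge estimate: for $(x - t)_+$ on a piece of width $h$ with $t$ at offset $\tau \in [0, h]$, the residual against its secant is a piecewise linear ``V''-shape whose $L^1$ norm evaluates to $\tau(h - \tau)/2$, which is bounded by $h^2/8$ and in particular by $h^2/2$. Substituting and summing first over the hinges within each piece and then over the $p'$ pieces, and using $h_k \leq h_{\max}$, yields
$$\|f_p - f_{p'}\|_1 \leq \sum_{k=1}^{p'} \frac{h_k^2}{2} \sum_{j \in K_k} |\delta_j| \leq \frac{h_{\max}^2}{2} \sum_{1 \leq k \leq p} |L_{k+1} - L_k|,$$
since the sets $\{K_k\}$ partition the kinks of $f_p$.

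The main obstacle I anticipate is bookkeeping rather than estimation: the breakpoints of $f_{p'}$ need not coincide with the kinks of $f_p$, so one must match each kink of $f_p$ to the unique piece of $f_{p'}$ that contains it, and verify that the hinge decomposition is insensitive to how $f_{p'}$'s pieces straddle $f_p$'s. Boundary hinges whose kink lies exactly at a piece endpoint contribute no error and must be accounted for to avoid double-counting. Once this indexing is in place, each inequality above is elementary.
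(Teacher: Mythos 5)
Your proof is correct, but it takes a genuinely different route from the paper's. The paper fixes a simplifying regime --- each piece of $f_{p'}$ is assumed tangent to $f_p$ (matching slope and value at some point) and to overlap at most two pieces of $f_p$, which requires $p' \geq p/2$ --- then bounds the pointwise error on each piece $I_k$ by $h_k\,|L_{k+1}-L_k|$ (zero if only one piece of $f_p$ is met, a linearly growing discrepancy past the single kink otherwise), integrates the resulting triangle to get $\tfrac{1}{2}h_k^2\,|L_{k+1}-L_k|$ per piece, and finally asserts without further argument that dropping the simplifying assumptions only tightens the bound. You instead construct $f_{p'}$ explicitly as the secant interpolant of $f_p$ at the breakpoints of the coarse partition, decompose $f_p$ on each coarse piece into an affine part plus hinges $\delta_j (x-t_j)_+$, and reduce the whole estimate to the single elementary computation $\|(x-t)_+ - s(x)\|_{L^1} = \tau(h-\tau)/2 \leq h^2/8$. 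What your approach buys is substantial: it handles arbitrarily many kinks of $f_p$ inside one piece of $f_{p'}$ with no restriction on $p'$ versus $p$, it makes the partition-of-kinks bookkeeping explicit rather than deferring it to an unproved "the general case is tighter" claim (which is the genuinely shaky step in the paper's argument, since a piece meeting three or more kinks accumulates several slope jumps), and it delivers the sharper constant $h_{\max}^2/8$ of which the stated $h_{\max}^2/2$ is a weakening. The paper's argument is shorter where its assumptions hold, but yours is the one that actually closes the general case.
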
  
\begin{proof}
Firstly, the proof proceeds with considering the special case 
in which $f_p$ conincides in slope and value with $f_{p'}$ at some point, 
and that each piece of $f_{p'}$ overlaps with at most $2$ distinct 
pieces of $f_{p}$. This is always possible when $p' \geq \frac{p}{2}$. 
Call $I_{k}$ the interval on which $f_{p'}$ is defined by its $k^{th}$ 
piece. If  $I_{k}$ overlaps with only one piece of $f_p$, then for $x \in I_{k}$,
\begin{equation}
  |f_p(x) - f_{p'}(x) | = 0
\end{equation}
If $I_{k}$ overlaps with pieces $k$ and $k+1$ of $f_p$, then for $x \in I_{k}$,
\begin{equation}
|f_p(x) - f_{p'}(x) | \leq  h_k | L_{k+1} - L_k |
\end{equation}
Each of the above follows from the assumption that $f_p $ and $ f_{p'}$ 
are equal in their slope and value at some point within  $I_{k}$. 
From this it follows that on $I_k$, 
\begin{equation}
\|f_p - f_{p'} \|_1 = \int_{I_k}  |f_p - f_{p'} |  \leq  \frac{1}{2} \sum_{1 \leq i \leq p}  h_k^2 | L_{k+1} - L_k | \leq   \frac{1}{2}   h_{\max}^2  \sum_{1 \leq k \leq p} | L_{k+1} - L_k |,
\end{equation}
so that on the entire domain of $f_p$ and $f_{p'}$, 
\begin{equation}
\|f_p - f_{p'} \|_1 \leq \frac{1}{2}   h_{\max}^2  \sum_{1 \leq k \leq p} | L_{k+1} - L_k |.
\end{equation}
Since removing the initial simplifying assumptions tightens this upper 
bound, the result holds.
\label{proof_james}
\end{proof}

The final observation that justifies the proposed approach is the fact
that optimization problems typically satisfy a local Lipschitz
condition, i.e., if the inputs of two instances are close, then they
admit solutions that are close as well, i.e., 
for $\optimal{\bm{y}}^{(i)} \in \cO(\bm{x}^{(i)})$ and
 $\optimal{\bm{y}}^{(j)} \in \cO(\bm{x}^{(j)})$, 
\begin{equation}
\label{eq:discrete_L}
 \| \optimal{\bm{y}}^{(i)} - \optimal{\bm{y}}^{(j)} \|  \leq 
 C \| \bm{x}^{(i)} - \bm{x}^{(j)} \|, 
 \end{equation} 
for some $C \geq 0$ and $\| \bm{x}^{(i)} - \bm{x}^{(j)} \| \leq \epsilon$,
where $\epsilon$ is a small value.
This is obviously true in linear programming
when the inputs vary in the objective coefficients or the right-hand
side of the constraints, but it also holds locally for many other
types of optimization problems. That observation suggests that, when
this local Lipschitz condition holds, it may be possible to generate
solution trajectories that are well-behaved and can be approximated
effectively. Note that Lipschitz functions can be nicely approximated
by neural networks as the following result indicates.

\begin{theorem}[Approximation \citep{chong2020closer}]
If $f:[0,1]^n \rightarrow \mathbb{R}$ is $L$-Lipschitz continuous, then for every $\epsilon > 0$, there exists some single-layer neural network $\rho$ of size $N$ such that 
\(\|f - \rho \|_{\infty} < \epsilon\),
where 
\(N = \binom{n+ \frac{3L}{\epsilon}}{n}.\)

\label{Cong_Corr_4.1}
\end{theorem}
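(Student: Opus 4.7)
The plan is to realize $\rho$ as a partition-of-unity interpolation of $f$ on a lattice whose resolution scales as $L/\epsilon$. Setting $m = \lceil 3L/\epsilon \rceil$, the natural index set is $\{\alpha \in \mathbb{Z}_{\ge 0}^n : |\alpha| \le m\}$, whose cardinality is exactly $\binom{n+m}{n} = N$; this is also the dimension of the space of polynomials of total degree at most $m$ in $n$ variables, and the count of Bernstein control points of degree $m$ on the standard simplex. The heuristic is that Lipschitz continuity forces the required mesh scale to be of order $\epsilon/L$, while the number of samples needed to cover $[0,1]^n$ at this scale accounts for the combinatorial factor in $n$.

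First, I would associate to each multi-index $\alpha$ an anchor $v_\alpha \in [0,1]^n$ (e.g.\ $v_\alpha = \alpha/m$ after mapping into a simplicial embedding) together with a single hidden unit $\phi_\alpha(x) = \sigma(w_\alpha^\top x + b_\alpha)$, with weights chosen so that $\{\phi_\alpha\}_\alpha$ forms a partition of unity concentrated near the $v_\alpha$. The network is
\[ \rho(x) \;=\; \sum_{|\alpha| \le m} f(v_\alpha)\,\phi_\alpha(x), \]
which has exactly $N$ hidden units. Then, from $\sum_\alpha \phi_\alpha(x) \equiv 1$ and the Lipschitz property,
\[ |f(x) - \rho(x)| \;\le\; \sum_\alpha \phi_\alpha(x)\,|f(x) - f(v_\alpha)| \;\le\; L \sum_\alpha \phi_\alpha(x)\,\|x - v_\alpha\|, \]
and a moment-type bound on the basis --- classical for Bernstein polynomials, and immediate from the support diameter for hat-function bases --- gives $\sum_\alpha \phi_\alpha(x)\,\|x - v_\alpha\| \le c/m$. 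Tuning the constant $3$ in $m = \lceil 3L/\epsilon \rceil$ to absorb $c$ then yields $\|f - \rho\|_\infty < \epsilon$.

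The main obstacle is realizing each partition-of-unity element $\phi_\alpha$ as \emph{exactly one} neuron, since this is what justifies the size count $N$ rather than a constant multiple of it. For polynomial or sufficiently smooth activations this is essentially free (Bernstein basis elements are polynomials in disguise and can be absorbed into the outer weights), whereas for ReLU a localized hat would normally require several units, so the cited construction presumably fixes an activation family for which the one-neuron-per-index correspondence holds. Calibrating the basis constant $c$ so that it matches the coefficient $3$ in the statement --- and ensuring that no hidden dependence on $n$ leaks into the error bound rather than being hidden solely in the combinatorial factor $\binom{n+m}{n}$ --- is the other delicate step.
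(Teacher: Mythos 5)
The paper itself offers no proof of this statement; it is imported verbatim from \citet{chong2020closer}, so your proposal can only be judged against the argument in that source. Measured that way, your route has a genuine gap — in fact two. First, the obstacle you flag at the end is fatal, not delicate: a single hidden unit computes a ridge function $\sigma(w_\alpha^\top x + b_\alpha)$, which is constant along the hyperplanes $w_\alpha^\top x = c$, and therefore cannot be localized near a point of $[0,1]^n$ for any $n \ge 2$, whatever activation family you choose. No partition of unity with one neuron per bump exists, so the size count $N$ cannot be justified this way. Second, even granting such bumps, the counting is wrong for a covering argument: the anchors $\alpha/m$ with $|\alpha| \le m$ populate the simplex $\{x : \sum_i x_i \le 1\}$, not the cube, and covering $[0,1]^n$ at mesh scale $\epsilon/L$ requires on the order of $(L/\epsilon)^n$ points — larger than $\binom{n + 3L/\epsilon}{n}$ by roughly a factor of $n!$. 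So the combinatorial factor in the theorem cannot be accounted for by sample counts on a lattice.

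The reason $\binom{n+3L/\epsilon}{n}$ appears is that it is the dimension of the space of $n$-variate polynomials of total degree at most $d = 3L/\epsilon$, and the cited proof runs through that space rather than through localization. Concretely: (i) a multivariate Jackson-type theorem gives a polynomial $p$ of total degree $d$ with $\|f - p\|_\infty \le c\,L/d$ for $L$-Lipschitz $f$, so $d \approx 3L/\epsilon$ suffices; (ii) for a suitable non-polynomial activation, a single hidden layer with $\dim \mathcal{P}_d = \binom{n+d}{n}$ units approximates any element of $\mathcal{P}_d$ to arbitrary accuracy — the classical mechanism being that finite differences of $\sigma(w^\top x + b)$ in the weight $w$ produce terms of the form $x^\alpha \sigma^{(|\alpha|)}(b)$, so the monomials of degree up to $d$ lie in the closure of the span of $\binom{n+d}{n}$ neurons. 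Your observation that $\binom{n+m}{n}$ equals both the lattice count on the simplex and $\dim \mathcal{P}_m$ is the right coincidence to notice, but the theorem uses the second interpretation, and only the second one survives in dimension $n \ge 2$.
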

The result above illustrates that the model capacity required to approximate a given function depends to a non-negligible extent on the Lipschitz constant value of the underlying function.

Note that the results in this section are bounds on the ability of 
neural networks to represent generic functions. In practice, these 
bounds themselves rarely guarantee the training good approximators, 
as the ability to minimize the empirical risk problem in practice is 
often another significant source of error. In light of these results 
however, it is to be expected that datasets which exhibit less variance 
and have small Lipschitz constants\footnote{The notation here is used to 
denote its discrete equivalent, as indicated in Equation \eqref{eq:discrete_L}.}
will be better suited to learning 
good function approximations. The following section presents a method 
for dataset generation motivated by these considerations. 

\newcommand{\bx}[1]{\bm{x}^{(#1)}}
\newcommand{\by}[1]{\bm{y}^{(#1)}}

\section{Optimal CO training data design}
\label{sec:optimal_dataset}

Given a set of input data $\{\bx{i}\}_{i=1}^N$, the goal is to construct the associated pairs $\by{i}$ for each $i\in [N]$, that solve the following problem
\begin{subequations}
\begin{align}
  \min_{\theta, \bm{y}^{(i)}} &\frac{1}{N} \sum_{i=1}^N \ell(f_\theta(\bx{i}), \by{i})
  \\
  \st:& \;\; \by{i} \in \argmin_{\bm{y} \in \mathcal{C}_{\bx{i}}} f(\bm{y}, \bx{i}).
\end{align}
\end{subequations}
One often equips the data point set $\{\bx{i}\}_{i=1}^N$ with an ordering relation $\preceq$ 
such that $\bm{x} \preceq \bm{x}' \Rightarrow \| \bm{x} \|_p \leq \| \bm{x}'\|_p$ for some $p$-norm. 
For example, in the scheduling domain, the data points $\bm{x}$ represent task start times and the training data are often generated by ``slowing down'' some machine, which simulates some unexpected ill-functioning component in the scheduling pipeline. In the energy domain, $\bm{x}$ represent the load demands and the training data are generated by increasing or decreasing these demands, simulating the different power load requests during daily operations in a power network.

From the space of co-optimal solutions $\by{i}$ to each problem
instance $\bx{i}$, the goal is to generate solutions which coincide,
to the extent possible, with a target function of low total variation
and Lipschitz factor, as well as a low number of constituent linear
pieces in the case of discrete optimization. While it may not be
possible to produce a target set that simultaneously optimizes each
of these metrics, they are confluent and can be improved
simultaneously. Natural heuristics are available which reduce these 
metrics substantially when compared with naive approaches. 

One heuristic aimed at satisfying the aforementioned properties
reduces to the problem of determining a solution set $\{\by{i}\}_
{i=1}^N$ for the inputs $\{\bx{i}\}_{i=1}^N$ of problem \eqref
{eq:opt} that minimizes their total variation:
\begin{subequations}
\begin{align}
  \minimize \textsl{TV}\left(\{\by{i}\}_{i=1}^N\right) =
  &\frac{1}{2} \sum_{i=1}^{N-1}
    \| \by{i+1} - \by{i} \|_p  \label{min_TV_obj}  \\ 
    \st:& \;\; \by{i} = \argmin_{\bm{y} \in \mathcal{C}_{\bx{i}}} f(\bm{y}, \bx{i}).
\end{align}
\label{TV_dataset_def}
\end{subequations}

\begin{wrapfigure}[11]{r}{160pt}
  \begin{algorithm}[H]
  {\small
    \DontPrintSemicolon
    \caption{Opt.~Data Generation}
    \label{alg:seqsolve}
    \setcounter{AlgoLine}{0}
    \SetKwInOut{Input}{input}
  
    \Input{$\{\bm{x}^{(i)}\}_{i=1}^N$: Input data}
    \label{line:1a}
      $\bm{y}^{(N)} \gets \optimal{\bm{y}}^{(N)} \in \tilde{\cO}(\bm{x}^{(N)})$\;
      \For{$i = N-1$ down to $1$}
     {
        $
        \bm{y}^{(i)} \in
          \begin{cases}
          \argmin_{\bm{y}} \hspace{10pt}  \| \bm{y} - \bm{y}^{(i+1)}\|_p\\
          \text{subject to:} \hspace{4pt}  \bm{y} \in \mathcal{C}_{\bm{x^{(i)}}} \\
                            \hspace{40pt} f(\bm{y}) \leq f(\optimal{\bm{y}}^{(i)})\!\!\!\!\!\!\!\!\!\!\!\!
         \end{cases}
      $
    }
    \Return $\bm{\chi} = \left\{\left(\bm{x}^{(i)}, \bm{y}^{(i)}\right)\right\}_{i=1}^N$
  }
  \end{algorithm}
\end{wrapfigure}  
In practice, this bi-level minimization cannot be
 achieved, due partially to its prohibitive size. It is possible,
 however, to minimize the individual terms of \eqref
 {min_TV_obj}, each subject to the result of the previous, by solving
 individual instances sequentially.    Algorithm \ref
 {alg:seqsolve} ensures that solutions to subsequent instances have
 minimal distance with respect to the chosen $p$-norm(the experiments
 of Section \ref{experimental_section} use $p=1$). This method
 approximates a set of solutions with minimal total variation, while
 ensuring that the maximum magnitude of change between subsequent
 instances is also small. When the data represent the result of a
 discrete optimization, this coincides naturally with a
 representative function which requires less pieces, with less
 extreme changes in slope. The method starts by solving a target
 instance, e.g., the last in the given ordering $\preceq$ (line 1).
 Therein, $\tilde{\cO}$ denotes the solution set of a (possibly approximated) minimizer
 for problem \eqref{eq:opt}. In the case of the job shop scheduling,
 for example, $\tilde{\cO}$ represents a local optimizer with a
 suitable time-limit. The process then generates the next dataset
 instance $\bm{y}^{(i)}$ in the ordering $\preceq$ by solving the
 optimization problem given in line (3). The problem finds a solution
 to problem $\bm{x}^{(i)}$ that is close to adjacent solution $\bm
 {y}^{(i+1)}$ while preserving \emph{optimality}, i.e., the objective
 of the sought $\bm{y}$ is constrained to be at most that of $\optimal{\bm{y}}^{(i)} \in
 \tilde{\cO}(\bm{x}^{(i)})$.

The method hinges on the assumption that a bound on 
$\optimal{\bm{y}}^{(i)}$ can be inferred from $\optimal{\bm{y}}^{(i+1)}$, 
as in the case studies analyzed in this paper. In addition, 
$\optimal{\bm{y}}^{(i+1)}$ may be used to hot-start the solution of 
the subsequent problem, carrying forward solution progress between 
iterations of Algorithm \ref{alg:seqsolve}. Therefore, a byproduct 
of this data-generation approach is that the optimization problem in 
line (3) can be well-approximated within a short timeout, resulting 
in enhanced efficiency which makes the data generation process viable 
in practice even for hard optimization problems that require substantial 
solving time when treated independently.

In addition to providing enhanced efficacy for learning, this method of generating target instances is generally preferable from a modeling point of view. When predicted solutions to related decision problems are close together, the resulting small changes are often more practical and actionable, and thus highly preferred in practice. {For example, a small change in power demands should result in an updated optimal power network configuration which is easy to achieve given its previous state.}

\section{Application to case studies}
\label{experimental_section}
\label{sec:jss} 

The concepts introduced above are applied in this section to two 
representative case studies, \emph{Job Shop Scheduling} (JSS) and 
\emph{Optimal Power Flow} (OPF). Both are of interest in the
 optimization and machine learning communities as practical problems
 which must be routinely solved, but are difficult to approximate
 under stringent time constraints. The JSS problem represents the
 class of combinatorial problems, while the OPF problem is continuous
 but nonlinear and non convex. Both lack solution methods with strong 
 guarantees on the rate of convergence, and the quality of solutions that 
 can be obtained. 
 In the studies described below, a deep neural ReLU network equipped 
 with a Lagrangian loss function (described in details in Appendix 
 \ref{app:lagrangian}) is used to predict the problem solutions that are 
 approximately feasible and close to optimal. Efficient projection 
 operators are subsequently applied to ensure feasibility of the 
 final output (See Appendix \ref{app:jss} and \ref{app:opf}).

\subsubsection*{Job shop scheduling}
Job Shop Scheduling (JSS) assumes a set of $J$ jobs, each
 consisting of a list of $M$ tasks to be completed in a specified
 order. Each task has a fixed processing time and is assigned to one
 of $M$ machines, so that each job assigns one task to each machine.
 The objective is to find a schedule with minimal \emph{makespan}, or
 time taken to process all tasks. The \emph{no-overlap} condition
 requires that for any two tasks assigned to the same machine, one
 must be complete before the other begins. See the problem
 specification in Appendix  \ref{app:jss}.
 The objective of the learning task is to predict the start times of 
 all tasks given a JSS problem specification (task duration, machine assignments). 

\textbf{Data Generation Algorithms}
The experiments examine the proposed models on a variety of problems 
from the JSPLIB library \cite{tamy0612_2014}. The ground truth data are constructed as follows: different input data $\bm{x}^{(i)}$ are generated by simulating a machine slowdown, i.e., by altering the time required to process the tasks on that machine by a constant amount which depends on the instance $i$. Each training dataset associated with a JSS benchmark is composed of a total of 5000 instances. 
Increasing the processing time of selected tasks may also change the 
difficulty of the scheduling. 
The method of sequential solving outlined in Section \ref{sec:optimal_dataset} is particularly well-suited to this context.
 Individual problem instances can be ordered relative to the amount
 of extension applied to those processing times, so that when $d^{
 (i)}_{jt}$ represents the time required to process task $t$ of job
 $j$ in instance $i$, $ d^{(i)}_{jt} \leq d^{(i+1)}_
 {jt} \; \; \forall j,t$. In this case, any solution to instance $d_{
 (i+1)}$ is feasible to instance $d_i$ (tasks in a feasible schedule
 cannot overlap when their processing times are reduced, and start
 times are held constant). As such, the method can be made efficient
 by passing the solution between subsequent instances as a
 hot-start. 

The analysis compares two datasets: One consisting of target solutions generated independently with a solving time limit of $1800$ seconds using the state-of-the-art IBM CP Optimizer constraint programming software (denoted as Standard), and one whose targets are generated according to algorithm \ref{alg:seqsolve}, called the Optimal Design dataset (denoted as OD). 

\begin{wrapfigure}[9]{r}{150pt}
\vspace{-15pt}
\resizebox{\linewidth}{!}
{
\begin{tabular}{rl ll}
\toprule
  Instance & \multicolumn{1}{c}{Size} 
           & \multicolumn{2}{c}{Total Variation ($\times 10^6$)}\\
\cmidrule(r){3-4} 
& $J\times M$ & Standard Data & OD Data \\
\midrule
ta25   & $20 \!\times\! 20$ & $67.8 $ & $\bm{0.194}$ \\ 
yn02   & $20 \!\times\! 20$ & $55.0 $ & $\bm{0.483}$ \\ 
swv03  & $20 \!\times\! 10$ & $109.4$ & $\bm{0.424}$\\ 
swv07  & $20 \!\times\! 15$ & $351.2$ & $\bm{0.100}$\\ 
swv11  & $50 \!\times\! 10$ & $352.0$ & $\bm{1.376}$\\
\bottomrule
\end{tabular}
}
  \caption{Standard vs OD training data: Total Variation.}
  \label{tab:JSS_tv}
  \vspace{-12pt}
\end{wrapfigure} Figure \ref{tab:JSS_tv} presents a comparison of the
 total variation resulting from the two datasets. Note that the OD
 datasets have total variation which is orders of magnitude lower
 than their Standard counterparts. Recall that a small total
 variation is perceived as a notion of well-behaveness from the perspective  of
function approximation. Additionally, it is noted that the total computation
 time required to generate the OD dataset is at least an order of
 magnitude smaller than that required to generate the standard
 dataset ($13.2$h vs.~$280$h). 
\begin{table}[!tb]
\centering
\resizebox{0.8\linewidth}{!}
{
\begin{tabular}{rl ll ll ll ll ll}
\toprule
Instance & \multicolumn{1}{c}{Size} 
         & \multicolumn{2}{c}{Prediction Error} 
         & \multicolumn{2}{c}{Constraint~Violation} 
         & \multicolumn{2}{c}{Optimality Gap (\%)} 
         & \multicolumn{2}{c}{Time SoTA Eq.~(s)}\\    
\cmidrule(r){3-4} 
\cmidrule(r){5-6}
\cmidrule(r){7-8}
\cmidrule(r){9-10} 
  & $J\times M$
  & Standard & OD 
  & Standard & OD 
  & Standard & OD
  & Standard & OD \\
\midrule                                                                   
ta25   & $20 \!\times\! 20$ & $193.9$ & $\bm{23.4}$ & $180.0$ & $\bm{45.5}$ & $10.3$ & $\bm{4.0}$  & $24$ & $\bm{550}$\\ 
yn02   & $20 \!\times\! 20$ & $153.2$ & $\bm{38.9}$ & $124.9$ & $\bm{70.3}$ & $9.1 $ & $\bm{4.5}$  & $27$ & $\bm{45 }$ \\ 
swv03  & $20 \!\times\! 10$ & $309.4$ & $\bm{12.4}$ & $206.9$ & $\bm{31.6}$ & $18.0$ & $\bm{2.2}$  & $15$ & $\bm{65 }$  \\ 
swv07  & $20 \!\times\! 15$ & $330.4$ & $\bm{19.9}$ & $280.1$ & $\bm{67.2}$ & $17.0$ & $\bm{3.0}$  & $15$ & $\bm{60 }$ \\ 
swv11  & $50 \!\times\! 10$ & $1090.0$ & $\bm{51.2}$ & $906.4$ & $\bm{151.7}$ & $28.5$ & $\bm{4.5}$  & $13$ & $\bm{100}$\\ 
\bottomrule
\end{tabular}
}
  \caption{Standard vs OD training data: prediction errors, constraint violations, and 
  optimality gap (the smaller the better), Time SoTA Eq.~(the larger the better). 
  Best results are highlight in bold.}
    \label{tab:JSS_acc}
\end{table}

\textbf{Prediction Errors and Constraint Violations}
Table \ref{tab:JSS_acc} reports the prediction errors 
as $L_1$-distance between the (feasible) predicted variables, 
i.e., the projections $\pi(\hat{\bm{y}})$ and their original ground-truth 
quantities ($\bm{y}$), the average constraint violation degrees, expressed 
as the $L_1$-distance between the predictions  and their  projections, and 
the optimality gap, which is the relative difference in makespan (or,
equivalently objective values) between the predicted (feasible) schedules 
and target schedules. All these metrics are averaged over all perturbed 
instances of the dataset and expressed in percentage. In the case of the former two metrics, values are reported as a percentage of the average task duration per individual instance. 
Notice that for all metrics the methods trained using the OD datasets
result in drastic improvements (i.e., one order of magnitude) with 
respect to the baseline method. 
Additionally, Table \ref{tab:JSS_acc} (last column) reports the runtime
required by CP-Optimizer to find a value with the same makespan as the 
one reported by the projected predictions (projection times are also included). 
The values are to be read as the larger the better, and present a remarkable
improvement over the baseline method.
It is also noted that the worst average time required to obtain a feasible 
solution from the predictions is 0.02 seconds.
Additional experiments, reported in Appendix \ref{app:results} also show 
that the observations are robust over a wide range of hyper-parameters
adopted to train the learning models.

{\em The results show that the OD data generation can drastically improve 
predictions qualities while reducing the effort required by a 
projection step to satisfy the problem constraints.}

\subsubsection*{AC Optimal Power Flow}
\label{sec:opf}

\newcommand{\OPF}{{\textsl{OPF}}}
\newcommand{\PF}{\textsl{PF}}
\newcommand{\sC}{{\mathscr{C}}}

\emph{Optimal Power Flow (OPF)} is the problem of finding the best
generator dispatch to meet the demands in a power network. The OPF is 
solved frequently in transmission systems around the world 
and is increasingly difficult due to intermittent renewable energy sources. 
The problem is required to satisfy the AC power flow equations, that 
are non-convex and nonlinear, and are a core building block in 
many power system applications.
The objective function captures the cost of the generator 
dispatch, and the Constraint set describes the power flow operational 
constraints, enforcing generator output, line flow limits, Kirchhoff's 
Current Law and Ohm's Law for a given load demand. 
The OPF receives its input from unit-commitment algorithms that 
specify which generators will be committed to deliver energy and 
reserves during the 24 hours of the next day. Because many generators 
are similar in nature (e.g., wind farms or solar farms connected to 
the same bus), the problem may have a large number of symmetries. If 
a bus has two symmetric generators with enough generator capacities, 
the unit commitment optimization may decide to choose one of the 
symmetric generators or to commit both and balance the generation 
between both of them.
The objective of the learning task is to predict the generator 
setpoints (power and voltage) for all buses given the problem inputs 
(load demands).

\textbf{Data Generation Algorithms}
The experiments compare this commitment strategy and its 
effect on learning on Pegase-89, which is a coarse aggregation of the 
French system and IEEE-118 and IEEE-300, from the NESTA library 
\cite{Coffrin14Nesta}. All base instances are solved using the Julia 
package PowerModels.jl \cite{Coffrin:18} with the nonlinear solver 
IPOPT \cite{wachter06on}. Additional data is reported in Appendix \ref{app:opf}. A number of renewable generators are duplicated at each 
node to disaggregate the generation capabilities. The test cases vary 
the load data by scaling the (input) loads from 0.8 to 1.0 times 
their nominal values. Instances with higher load pattern are typically infeasible. 
The unit-commitment strategy sketched above 
can select any of the symmetric generators at a given bus (Standard data). 
The optimal objective values for a given load are the same, 
but the optimal solutions vary substantially. Note that, when the 
unit-commitment algorithm commits generators by removing symmetries (OD data), the solutions for are typically close to each other when the loads are close. As a result, they naturally correspond to the generation procedure advocated in this paper. 

\begin{table}[!tb]
\centering
\resizebox{0.75\linewidth}{!}
{
\begin{tabular}{rl ll ll ll ll}
\toprule
Instance & \multicolumn{1}{c}{Size} 
         & \multicolumn{2}{c}{Prediction Error} 
         & \multicolumn{2}{c}{Constraint~Violation} 
         & \multicolumn{2}{c}{Optimality Gap (\%)}\\
\cmidrule(r){3-4} \cmidrule(r){5-6} \cmidrule(r){7-8}
  & No.~buses
  & Standard & OD 
  & Standard & OD 
  & Standard & OD \\
\midrule 
 Pegase-89 & 89  & $89.17$ & $\bm{2.78}$ & $1.353$ & $\bm{0.003}$ & $20.1$ & $\bm{0.83}$ \\
 IEEE-118  & 118 & $36.55$ & $\bm{0.54}$ & $1.330$ & $\bm{0.002}$ & $3.80$ & $\bm{0.36}$ \\
 IEEE-300  & 300 & $157.3$ & $\bm{2.27}$ & $1.891$ & $\bm{0.009}$ & $22.9$ & $\bm{0.12}$ \\
\bottomrule
\end{tabular}
}
  \caption{Standard vs OD training data: prediction errors, constraint violations, and 
  optimality gap.}
    \label{tab:OPF_acc}
\end{table}

\textbf{Prediction Errors and Constraint Violations}
As shown in Table \ref{tab:OPF_acc}, when compared the OD approach to
data generation results in predictions that are closer to their 
optimal target solutions (error expressed in MegaWatt (MW)), reduce the 
constraint violations (expressed as $L_1$-distance between the predictions
and their projections), and improve the optimality gap, which is the 
relative difference in objectives between the predicted (feasible) 
solutions and the target ones. 

\section{Limitations and Conclusions}
\label{sec:limitations}
Aside from the inherent difficulty of learning feasible CO solutions, 
a practical challenge is represented by the 
data generation itself. Generating training datasets for supervised 
learning tasks requires solving many instances of hard CO problems, 
which can be time consuming and imposes a toll on energy usage and CO2 
emissions. In this respect, an advantage of the proposed method, 
is its ability to use hot-starts to generate instances incrementally,
resulting in enhanced efficiency 
even for hard optimization problems that require substantial solving 
time when treated independently.

A challenge posed by the proposed data generation methodology its 
restriction to classes of CO problems that do not necessarily require 
diverse solutions over time. For example, in timetabling applications, 
as in the design of employees shifts, a desired condition may be for 
shifts to be diverse for different but similar inputs. The proposed 
methodology may, in fact, induce a learner to predict similar solutions 
across similar input data. 
A possible solution to this problem may be that of generating various 
\emph{trajectories} of solutions, learn from them with independent models, 
and then randomize the model selection to generate a prediction. 

Aside these limitations, the observations raised in this work may 
be significant in several areas: In addition to approximating hard 
optimization problems, the optimal dataset generation strategy introduced 
in this paper may be useful to the line of work on integrating CO and 
machine learning for predictive and prescriptive analytics, as well as 
for physics constrained learning problems, two areas with significant 
economic and societal impacts.


\section*{Acknowledgments}

This research is partially supported by NSF grant 2007164. Its views and conclusions are those of the authors only and should not be interpreted as representing the official policies, either expressed or implied, of the sponsoring organizations, agencies, or the U.S.~government.

\bibliographystyle{abbrvnat}
\bibliography{references}

\begin{thebibliography}{34}
\providecommand{\natexlab}[1]{#1}
\providecommand{\url}[1]{\texttt{#1}}
\expandafter\ifx\csname urlstyle\endcsname\relax
  \providecommand{\doi}[1]{doi: #1}\else
  \providecommand{\doi}{doi: \begingroup \urlstyle{rm}\Url}\fi

\bibitem[Amos and Kolter(2017)]{amos2019optnet}
B.~Amos and J.~Z. Kolter.
\newblock Optnet: Differentiable optimization as a layer in neural networks.
\newblock In \emph{International Conference on Machine Learning ({ICML})},
  pages 136--145. PMLR, 2017.

\bibitem[Arora et~al.(2016)Arora, Basu, Mianjy, and
  Mukherjee]{arora2016understanding}
R.~Arora, A.~Basu, P.~Mianjy, and A.~Mukherjee.
\newblock Understanding deep neural networks with rectified linear units.
\newblock \emph{arXiv preprint arXiv:1611.01491}, 2016.

\bibitem[Balcan et~al.(2018)Balcan, Dick, Sandholm, and
  Vitercik]{balcan2018learning}
M.-F. Balcan, T.~Dick, T.~Sandholm, and E.~Vitercik.
\newblock Learning to branch.
\newblock In \emph{International conference on machine learning}, pages
  344--353. PMLR, 2018.

\bibitem[{Baran} and {Wu}(1989)]{baran:89}
M.~E. {Baran} and F.~F. {Wu}.
\newblock Optimal capacitor placement on radial distribution systems.
\newblock \emph{IEEE TPD}, 4\penalty0 (1):\penalty0 725--734, Jan 1989.

\bibitem[Bengio et~al.(2020)Bengio, Lodi, and Prouvost]{bengio2020machine}
Y.~Bengio, A.~Lodi, and A.~Prouvost.
\newblock Machine learning for combinatorial optimization: a methodological
  tour d’horizon.
\newblock \emph{European Journal of Operational Research}, 2020.

\bibitem[Borghesi et~al.(2020)Borghesi, Tagliavini, Lombardi, Benini, and
  Milano]{borghesi2020combining}
A.~Borghesi, G.~Tagliavini, M.~Lombardi, L.~Benini, and M.~Milano.
\newblock Combining learning and optimization for transprecision computing.
\newblock In \emph{Proceedings of the 17th ACM International Conference on
  Computing Frontiers}, pages 10--18, 2020.

\bibitem[Boyd et~al.(2004)Boyd, Boyd, and Vandenberghe]{boydconvex}
S.~Boyd, S.~P. Boyd, and L.~Vandenberghe.
\newblock \emph{Convex optimization}.
\newblock Cambridge university press, 2004.

\bibitem[Cain et~al.(2012)Cain, O’neill, and Castillo]{Cain12historyof}
M.~B. Cain, R.~P. O’neill, and A.~Castillo.
\newblock History of optimal power flow and formulations optimal power flow
  paper 1.
\newblock
  \url{https://www.ferc.gov/industries/electric/indus-act/market-planning/opf-papers.asp},
  2012.

\bibitem[Chong(2020)]{chong2020closer}
K.~F.~E. Chong.
\newblock A closer look at the approximation capabilities of neural networks.
\newblock In \emph{8th International Conference on Learning Representations,
  {ICLR} 2020, Addis Ababa, Ethiopia, April 26-30, 2020}. OpenReview.net, 2020.
\newblock URL \url{https://openreview.net/forum?id=rkevSgrtPr}.

\bibitem[Coffrin et~al.(2014)Coffrin, Gordon, and Scott]{Coffrin14Nesta}
C.~Coffrin, D.~Gordon, and P.~Scott.
\newblock {NESTA}, the {NICTA} energy system test case archive.
\newblock \emph{CoRR}, abs/1411.0359, 2014.
\newblock URL \url{http://arxiv.org/abs/1411.0359}.

\bibitem[Coffrin et~al.(2018)Coffrin, Bent, Sundar, Ng, and Lubin]{Coffrin:18}
C.~Coffrin, R.~Bent, K.~Sundar, Y.~Ng, and M.~Lubin.
\newblock Powermodels.jl: An open-source framework for exploring power flow
  formulations.
\newblock In \emph{PSCC}, June 2018.

\bibitem[Detassis et~al.(2020)Detassis, Lombardi, and
  Milano]{detassis2020teaching}
F.~Detassis, M.~Lombardi, and M.~Milano.
\newblock Teaching the old dog new tricks: supervised learning with
  constraints.
\newblock In A.~Saffiotti, L.~Serafini, and P.~Lukowicz, editors,
  \emph{Proceedings of the First International Workshop on New Foundations for
  Human-Centered {AI} ({NeHuAI})}, volume 2659 of \emph{{CEUR} Workshop
  Proceedings}, pages 44--51, 2020.

\bibitem[Deutche-Energue-Agentur(2019)]{Highway50}
Deutche-Energue-Agentur.
\newblock The e-highway2050 project.
\newblock \url{http://www.e-highway2050.eu}, 2019.
\newblock Accessed: 2019-11-19.

\bibitem[Fioretto et~al.(2020{\natexlab{a}})Fioretto, Hentenryck, Mak, Tran,
  Baldo, and Lombardi]{fioretto2020lagrangian}
F.~Fioretto, P.~V. Hentenryck, T.~W. Mak, C.~Tran, F.~Baldo, and M.~Lombardi.
\newblock Lagrangian duality for constrained deep learning.
\newblock \emph{arXiv:2001.09394}, 2020{\natexlab{a}}.

\bibitem[Fioretto et~al.(2020{\natexlab{b}})Fioretto, Mak, and {Van
  Hentenryck}]{Fioretto:AAAI-20}
F.~Fioretto, T.~W. Mak, and P.~{Van Hentenryck}.
\newblock Predicting ac optimal power flows: Combining deep learning and
  lagrangian dual methods.
\newblock In \emph{Proceedings of the {AAAI} Conference on Artificial
  Intelligence ({AAAI})}, pages 630--637, 2020{\natexlab{b}}.

\bibitem[{Fisher} et~al.(2008){Fisher}, {O'Neill}, and {Ferris}]{OTS}
E.~B. {Fisher}, R.~P. {O'Neill}, and M.~C. {Ferris}.
\newblock Optimal transmission switching.
\newblock \emph{IEEE Transactions on Power Systems}, 23\penalty0 (3):\penalty0
  1346--1355, Aug 2008.

\bibitem[Huang(2020)]{huang2020relu}
C.~Huang.
\newblock Relu networks are universal approximators via piecewise linear or
  constant functions.
\newblock \emph{Neural Computation}, 32\penalty0 (11):\penalty0 2249--2278,
  2020.

\bibitem[Khalil et~al.(2016)Khalil, Le~Bodic, Song, Nemhauser, and
  Dilkina]{khalil2016learning}
E.~Khalil, P.~Le~Bodic, L.~Song, G.~Nemhauser, and B.~Dilkina.
\newblock Learning to branch in mixed integer programming.
\newblock In \emph{Proceedings of the AAAI Conference on Artificial
  Intelligence}, volume~30, 2016.

\bibitem[Kotary et~al.(2021)Kotary, Fioretto, Van~Hentenryck, and
  Wilder]{kotary2021end}
J.~Kotary, F.~Fioretto, P.~Van~Hentenryck, and B.~Wilder.
\newblock End-to-end constrained optimization learning: A survey.
\newblock \emph{arXiv preprint arXiv:2103.16378}, 2021.

\bibitem[Mandi et~al.(2020)Mandi, Stuckey, Guns, et~al.]{mandi2019smart}
J.~Mandi, P.~J. Stuckey, T.~Guns, et~al.
\newblock Smart predict-and-optimize for hard combinatorial optimization
  problems.
\newblock In \emph{Proceedings of the {AAAI} Conference on Artificial
  Intelligence ({AAAI})}, volume~34, pages 1603--1610, 2020.

\bibitem[Monticelli et~al.(1987)Monticelli, Pereira, and
  Granville]{monticelli:87}
A.~Monticelli, M.~Pereira, and S.~Granville.
\newblock Security-constrained optimal power flow with post-contingency
  corrective rescheduling.
\newblock \emph{IEEE TPS}, 2\penalty0 (1):\penalty0 175--180, 1987.

\bibitem[Nair et~al.(2020)Nair, Bartunov, Gimeno, von Glehn, Lichocki, Lobov,
  O'Donoghue, Sonnerat, Tjandraatmadja, Wang, et~al.]{nair2020solving}
V.~Nair, S.~Bartunov, F.~Gimeno, I.~von Glehn, P.~Lichocki, I.~Lobov,
  B.~O'Donoghue, N.~Sonnerat, C.~Tjandraatmadja, P.~Wang, et~al.
\newblock Solving mixed integer programs using neural networks.
\newblock \emph{arXiv preprint arXiv:2012.13349}, 2020.

\bibitem[{Niharika} et~al.(2016){Niharika}, {Verma}, and {Mukherjee}]{verma:16}
{Niharika}, S.~{Verma}, and V.~{Mukherjee}.
\newblock Transmission expansion planning: A review.
\newblock In \emph{International Conference on Energy Efficient Technologies
  for Sustainability}, pages 350--355, 2016.

\bibitem[Nocedal and Wright(2006)]{nocedal2006numerical}
J.~Nocedal and S.~Wright.
\newblock \emph{Numerical optimization}.
\newblock Springer Science \& Business Media, 2006.

\bibitem[Nowak et~al.(2018)Nowak, Villar, Bandeira, and
  Bruna]{nowak2018revised}
A.~Nowak, S.~Villar, A.~S. Bandeira, and J.~Bruna.
\newblock Revised note on learning algorithms for quadratic assignment with
  graph neural networks, 2018.

\bibitem[{Pache} et~al.(2015){Pache}, {Maeght}, {Seguinot}, {Zani},
  {Lumbreras}, {Ramos}, {Agapoff}, {Warland}, {Rouco}, and
  {Panciatici}]{pachenew}
C.~{Pache}, J.~{Maeght}, B.~{Seguinot}, A.~{Zani}, S.~{Lumbreras}, A.~{Ramos},
  S.~{Agapoff}, L.~{Warland}, L.~{Rouco}, and P.~{Panciatici}.
\newblock Enhanced pan-european transmission planning methodology.
\newblock In \emph{IEEE Power Energy Society General Meeting}, July 2015.

\bibitem[Pogan{\v{c}}i{\'c} et~al.(2020)Pogan{\v{c}}i{\'c}, Paulus, Musil,
  Martius, and Rolinek]{vlastelica2020differentiation}
M.~V. Pogan{\v{c}}i{\'c}, A.~Paulus, V.~Musil, G.~Martius, and M.~Rolinek.
\newblock Differentiation of blackbox combinatorial solvers.
\newblock In \emph{International Conference on Learning Representations
  ({ICLR})}, 2020.

\bibitem[tamy0612(2014)]{tamy0612_2014}
tamy0612.
\newblock Jsplib: Benchmark instances for job-shop scheduling problem, Nov
  2014.
\newblock URL \url{https://github.com/tamy0612/JSPLIB}.

\bibitem[{Tong} and {Ni}(2011)]{Tong:11}
J.~{Tong} and H.~{Ni}.
\newblock Look-ahead multi-time frame generator control and dispatch method in
  {PJM} real time operations.
\newblock In \emph{IEEE Power and Energy Society General Meeting}, July 2011.

\bibitem[Tran et~al.(2021)Tran, Fioretto, and Hentenryck]{tran:20_fairdp}
C.~Tran, F.~Fioretto, and P.~V. Hentenryck.
\newblock Differentially private and fair deep learning: A lagrangian dual
  approach.
\newblock In \emph{Proceedings of the {AAAI} Conference on Artificial
  Intelligence ({AAAI})}, 2021.

\bibitem[Vesselinova et~al.(2020)Vesselinova, Steinert, Perez-Ramirez, and
  Boman]{vesselinova2020learning}
N.~Vesselinova, R.~Steinert, D.~F. Perez-Ramirez, and M.~Boman.
\newblock Learning combinatorial optimization on graphs: A survey with
  applications to networking.
\newblock \emph{IEEE Access}, 8:\penalty0 120388--120416, 2020.

\bibitem[Vinyals et~al.(2015)Vinyals, Fortunato, and
  Jaitly]{vinyals2017pointer}
O.~Vinyals, M.~Fortunato, and N.~Jaitly.
\newblock Pointer networks.
\newblock In \emph{Advances in Neural Information Processing Systems
  ({NeurIPS}}, pages 2692--2700, 2015.

\bibitem[W\"achter and Biegler(2006)]{wachter06on}
A.~W\"achter and L.~T. Biegler.
\newblock On the implementation of an interior-point filter line-search
  algorithm for large-scale nonlinear programming.
\newblock \emph{Mathematical Programming}, 106\penalty0 (1):\penalty0 25--57,
  2006.

\bibitem[Wilder et~al.(2019)Wilder, Dilkina, and Tambe]{wilder2018melding}
B.~Wilder, B.~Dilkina, and M.~Tambe.
\newblock Melding the data-decisions pipeline: Decision-focused learning for
  combinatorial optimization.
\newblock In \emph{Proceedings of the {AAAI} Conference on Artificial
  Intelligence ({AAAI})}, volume~33, pages 1658--1665, 2019.

\end{thebibliography}

\appendix
\pagenumbering{arabic}
\renewcommand{\thepage} {A--\arabic{page}}

\section{Lagrangian Dual-based approach}
\label{app:lagrangian}

In both case studies presented below, a constrained deep learning approach is used which encourages the satisfaction of constraints within predicted solutions by accounting for the violation of constraints in a \emph{Lagrangian} loss function  
\begin{equation}
\label{eq:lagrangian_function2}
f_{\bm{\lambda}}(y) = f(y) + \sum_{i=1}^m \lambda_i \max(0, g_i(y)),
\end{equation}
where $f$ is a standard loss function (i.e., \emph{mean squared error}), $\lambda_i$ are \emph{Lagrange multipliers} and $g_i$ represent the constraints of the optimization problem under the generic representation 

\begin{equation}
\label{eq:problem}
  {\cal P} = \argmin_{y} h(y) \;\;
  \mbox{subject to } \;\; g_i(y) \leq 0 \;\; (\forall i \in [m]).
\end{equation}

Training a neural network to minimize the Lagrangian loss for some value of $\lambda$ is anologous to computing a Lagrangian Relaxation:

\begin{equation}
\label{eq:LR}
LR_{\bm{\lambda}} = \argmin_y f_{\bm{\lambda}}(y),
\end{equation}

and the \emph{Lagrangian Dual} problem maximizes the relaxation over all possible $\lambda$:

\begin{equation}
\label{eq:LD}
LD = \argmax_{\bm{\lambda} \geq 0} f(LR_{\bm{\lambda}}).
\end{equation}

The Lagrangian deep learning model is trained by alternately carrying out gradient descent for each value of $\lambda$, and updating the $\lambda_i$ based on the resulting magnitudes of  constraint violation in its predicted solutions.

\section{Job Shop Scheduling}
\label{app:jss}

The Job Shop Scheduling (JSS) problem can be viewed as an integer optimization program with linear objective function and linear, disjunctive constraints. For JSS problems with $J$ jobs and $T$ machines, a particular instance is fully determined by the processing times $d_t^j$, along with machine assignments $\sigma_t^j$, and its solution consists of the resulting optimal task start times $s_j^t$.    The full problem specification is shown below in the system (\ref{model:jss}). The constraints (\ref{con:jss-b}) enforce precedence between tasks that must be scheduled in the specified order within their respective job. Constraints (\ref{con:jss-c}) ensure that no two tasks overlap in time when assigned to the same machine.

\subsection{Problem specification}
\label{app:jss_spec}

\begin{subequations}
    {
    \label{model:jss}
    \vspace{-6pt}
    \begin{flalign}
        \mathcal{P}(\bm{d}) = 
        \argmin_{\bm{s}}\;\;& u
        \label{jss-obj} \\
        \mbox{subject to:}\;\;&
                            u \geq s_T^j             && \!\!\!\!\! \forall j \!\in\! [J]        
                                \label{con:jss-a} \\
                            & s_{t+1}^j \geq s_t^j + {d}^{j}_{t}   
                                                                && \!\!\!\!\! \forall j \!\in\! [J-1], \forall t \!\in\! [T]    
                                \label{con:jss-b} \\
                            & s_t^{j} \geq s_{t'}^{j'} + {d}_{t'}^{j'}\; \lor\; s_{t'}^{j'} \geq s_{t}^{j} + {d}_{t}^{j} 
                                                                 && \!\!\!\!\! \forall j, j' \!\in\! [J], t,t' \!\in\! [T]
                                                                 \,\text{with}\, {\sigma}_{t}^{j} = {\sigma}_{t'}^{j'}      \label{con:jss-c} \\
                            & s_{t}^{j} \!\in\! \mathbb{N}           
                                                                && \!\!\!\!\! \forall j \!\in\! [J], t \!\in\! [T] \label{con:jss-d}
    \end{flalign}
    }
    \vspace{-12pt}
\end{subequations}

Given a predicted, possibly infeasible schedule $\hat{s}$, the degree of violation in each constraint must be measured in order to update the multipliers of the Lagrangian loss function. The violation of task-precedence constraints (\ref{con:jss-b})  and no-overlap constraint (\ref{con:jss-c}) are calculated as in (\ref{viol:tp}) and (\ref{viol:nv}), respectively. Note that the violation of the disjunctive no-overlap condition between two tasks is measured as the amount of time at which both tasks are scheduled simultaneously on some machine. 

\begin{subequations}
\begin{flalign}
    \label{viol:tp}
     &\nu_{10b}\hugP{\hat{s}_t^j, d_t^j} = 
     \max \hugP{0, \hat{s}_{t}^j + d_{t}^j - \hat{s}_{t+1}^j} \\
     \label{viol:nv}
     &\nu_{10c}\hugP{\hat{s}_t^j, d_t^j, \hat{s}_{t'}^{j'}, d_{t'}^{j'}} =
        \min\hugP{\nu_{10c}^L\hugP{\hat{s}_t^j, d_t^j, \hat{s}_{t'}^{j'}, d_{t'}^{j'}}, 
         \nu_{10c}^R\hugP{\hat{s}_t^j, d_t^j, \hat{s}_{t'}^{j'}, d_{t'}^{j'}}},
\end{flalign}
\end{subequations}
where
\begin{align*}
    \nu_{10c}^L\hugP{\hat{s}_t^j, d_t^j, \hat{s}_{t'}^{j'}, d_{t'}^{j'}} &= 
    \max\hugP{0, \hat{s}_t^j + d^t_j - \hat{s}_{t'}^{j'}}\\
        \nu_{10c}^R\hugP{\hat{s}_t^j, d_t^j, \hat{s}_{t'}^{j'}, d_{t'}^{j'}} &= 
    \max\hugP{0, \hat{s}_{t'}^{j'} + d^{t'}_{j'} - \hat{s}_{t}^{j}}.
\end{align*}

The Lagrangian-based deep learning model does not necessarily produce feasible schedules directly. An additional operation is required for the construction of feasible solutions, given the direct neural network outputs representing schedules. The model presented below is used to construct solutions that are integral, and feasible to the original problem constraints. Integrality follows from the total unimodularity of constraints  (\ref{recon:11a}, \ref{recon:11b}), which converts the no-overlap condition of the problem  (\ref{model:jss}) into addition task-precedence constraints following the order of predicted start times $\hat{s}$, denoted $\preceq_{\hat{\bm{s}}}$. By minimizing the makespan as in (\ref{model:jss}), this procedure ensures optimality of the resulting schedules subject to the imposed ordering. 

\begin{subequations}
\label{model:jss_rec}
    \begin{flalign}
    \Pi(\bm{s}) = \qquad
    {\argmin}_{\bm{s}}\;\;&   u     && \notag \\
        \mbox{subject to:}\;\;  
            &\eqref{con:jss-a}, \eqref{con:jss-b}&& \notag\\
            & s_t^{j} \geq s_{t'}^{j'} + {d}_{t'}^{j'} \!\!\!\!\!\!\!
            && \forall j, j' \!\in\! [J], t,t' \!\in\! [T] \; \text{s.t.} \; (j,t) \preceq_{\hat{\bm{s}}} (j', t') 
            \label{recon:11a} \\
            & s_{t}^{j} \geq 0           && \forall j \!\in\! [J], t \!\in\! [T] \label{recon:11b}
	\end{flalign}
\end{subequations}

\subsection{Dataset Details}
\label{app:jss_datasets}

The experimental setting, as defined by the training and test data, simulates a situation in which some component of a manufacturing system 'slows down', causing processing times to extend on all tasks assigned to a particular machine. Each experimental dataset is generated beginning with a root problem instance taken from the JSPLIB benchmark library for JSS instances. Further instances are generated by increasing processing times on one machine, uniformly over $5000$ new instances, to a maximum of $50$ percent increase over the initial values. To accommodate these incremental perturbations in problem data while keeping all values integral, a large multiplicative scaling factor is applied to all processing times of the root instance.
Targets for the supervised learning are generated by solving the individual instances according to the methodology proposed in Section \ref{sec:optimal_dataset}. A baseline set of solutions is generated for comparison, by solving individual instances in parallel with a time limit per instance of $1800$ seconds. 

The results presented in Section \ref{experimental_section} are taken from the best-performing models, with respect to optimality of the predicted solutions following application of the model (\ref{model:jss_rec}), among the results of a hyperparameter search. The model training follows the selection of parameters presented in Table \ref{tab:JSS_params}.

\begin{table}[!h]
\centering
\begin{tabular}{l@{\hspace{6pt}} |c@{\hspace{6pt}}
                 || @{\hspace{6pt}}l@{\hspace{6pt}} |c@{\hspace{6pt}}
  } 
	\toprule
      Parameter & Value  &	  Parameter & Value  \\
  	\midrule
  	\textbf{Epochs} & 500 & \textbf{Batch Size}& 16   \\
	\textbf{Learning rate} & $[1.25e^{-4} ,2e^{-3}]$ & \textbf{Batch Normalization}& False   \\
	\textbf{Dual learning rate} & $[1e^{-3} , 5e^{-2}]$   & \textbf{Gradient Clipping}& False    \\
	\textbf{Hidden layers} & 2  & \textbf{Activation Function}& ReLU    \\
	\bottomrule
\end{tabular}
\caption{JSS: Training Parameters}
\label{tab:JSS_params} 
\end{table}

\subsection{Network Architecture}
\label{app:jss_net}

The neural network architecture used to learn solutions to the JSS
problem takes into account the structure of its constraints,
organizing input data by individual job, and machine of the
associated tasks. When $\cI^{
(j)}_k$ and $\cI^{(m)}_k$ represent the input array indices corresponding
to job $k$ and machine $k$, the associated subarrays $d[\cI^{
(j)}_k]$ and  $d[\cI^{(m)}_k]$ are each passed from
the input array to a series of respective \emph{Job} and \emph{Machine layers}. The resulting arrays, one for every job and machine, are concatenated to form a single array and passed to further \emph{Shared Layers}. Each shared layer has size $2JT$ in the case of $J$ jobs and $T$ machines, and a final layer maps the output to an array of size $JM$, equal to the total number of tasks. This
architecture improves accuracy significantly in practice, when
compared with fully connected networks of comparable size.

\section{AC Optimal Power Flow}
\label{app:opf}

\subsection{Problem specification}
\label{app:opt_spec}

\begin{model}[t]
  {\small
  \caption{${\cal O}_{\text{OPF}}$: AC Optimal Power Flow}
  \label{model:ac_opf}
  \vspace{-6pt}
  \begin{align}
    \mbox{\bf variables:} \;\;
    & S^g_i, V_i \;\; \forall i\in N, \;\;
      S^f_{ij}   \;\; \forall(i,j)\in E \cup E^R \nonumber \\
    \mbox{\bf minimize:} \;\;
    & {\cO}(\bm{S^d}) = \sum_{i \in N} {c}_{2i} (\Re(S^g_i))^2 + {c}_{1i}\Re(S^g_i) + {c}_{0i} \label{ac_obj} \\
    \mbox{\bf subject to:} \;\; 
    & \angle V_{i} = 0, \;\; i \in N \label{eq:ac_0} \\
    & {v}^l_i \leq |V_i| \leq {v}^u_i     \;\; \forall i \in N \label{eq:ac_1} \\
    & {\theta}^{l}_{ij} \leq \angle (V_i V^*_j) \leq {\theta}^{u}_{ij} \;\; \forall (i,j) \in E  \label{eq:ac_2}  \\
    & {S}^{gl}_i \leq S^g_i \leq {S}^{gu}_i \;\; \forall i \in N \label{eq:ac_3}  \\
    & |S^f_{ij}| \leq {s}^{fu}_{ij}          \;\; \forall (i,j) \in E \cup E^R \label{eq:ac_4}  \\
    & S^g_i - {S}^d_i = \textstyle\sum_{(i,j)\in E \cup E^R} S^f_{ij} \;\; \forall i\in N \label{eq:ac_5}  \\ 
    & S^f_{ij} = {Y}^*_{ij} |V_i|^2 - {Y}^*_{ij} V_i V^*_j       \;\; \forall (i,j)\in E \cup E^R \label{eq:ac_6}
  \end{align}
  }
  \vspace{-12pt}
\end{model}

\emph{Optimal Power Flow (OPF)} is the problem of finding the best generator dispatch to meet the demands
in a power network, while satisfying challenging transmission constraints 
such as the nonlinear nonconvex AC power flow equations and also operational limits such as voltage and generation bounds.
Finding good OPF predictions are important, as a 5\% reduction in generation costs could save billions of dollars (USD) per year~\cite{Cain12historyof}.
In addition, the OPF problem is a fundamental building bock of many
applications, including security-constrained OPFs \cite{monticelli:87}), optimal transmission switching \cite{OTS}, capacitor placement \cite{baran:89}, and expansion planning \cite{verma:16}.

Typically, generation schedules are updated in intervals of 5
minutes \cite{Tong:11}, possibly using a solution to the OPF solved in
the previous step as a starting point. In recent years, the
integration of renewable energy in sub-transmission and distribution
systems has introduced significant stochasticity in front and behind
the meter, making load profiles much harder to predict and introducing
significant variations in load and generation. This uncertainty forces
system operators to adjust the generators setpoints with increasing
frequency in order to serve the power demand while ensuring stable
network operations. However, the resolution frequency to solve OPFs is
limited by their computational complexity. To address this issue,
system operators typically solve OPF approximations such as the linear
DC model (DC-OPF).  While these approximations are more efficient
computationally, their solution may be sub-optimal and induce
substantial economical losses, or they may fail to satisfy the
physical and engineering constraints.

Similar issues also arise in expansion planning and other
configuration problems, where plans are evaluated by solving a massive
number of multi-year Monte-Carlo simulations at 15-minute intervals
\cite{pachenew,Highway50}. Additionally, the stochasticity introduced
by renewable energy sources further increases the number of scenarios
to consider.  Therefore, modern approaches recur to the linear DC-OPF
approximation and focus only on the scenarios considered most
pertinent \cite{pachenew} at the expense of the fidelity of the
simulations.

A power network $\bm{\cN}$ can be represented as
a graph $(N, E)$, where the nodes in $N$ represent buses and the edges
in $E$ represent lines. The edges in $E$ are directed and $E^R$ is
used to denote those arcs in $E$ but in reverse direction.  The AC
power flow equations are based on complex quantities for current $I$,
voltage $V$, admittance $Y$, and power $S$, and these equations are a
core building block in many power system applications.
Model~\ref{model:ac_opf} shows the AC OPF formulation, with
variables/quantities shown in the complex domain.  Superscripts $u$ and
$l$ are used to indicate upper and lower bounds for variables. The
objective function ${\cO}(\bm{S^g})$ captures the cost of the
generator dispatch, with $\bm{S^g}$ denoting the vector of generator
dispatch values $(S^g_i \:|\: i \in N)$.  Constraint \eqref{eq:ac_0}
sets the reference angle to zero for the slack bus $i \in N$ to
eliminate numerical symmetries.  Constraints \eqref{eq:ac_1} and
\eqref{eq:ac_2} capture the voltage and phase angle difference bounds.
Constraints \eqref{eq:ac_3} and \eqref{eq:ac_4} enforce the generator
output and line flow limits.  Finally, Constraints \eqref{eq:ac_5}
capture Kirchhoff's Current Law and Constraints \eqref{eq:ac_6}
capture Ohm's Law.

The Lagrangian-based deep learning model is based on the model 
reported in \cite{Fioretto:AAAI-20}.

\subsection{Dataset Details}
\label{app:opf_datasets}

\begin{table}[t]
\centering
{
  \begin{tabular}{l@{\hspace{6pt}} |c@{\hspace{6pt}} 
                 c@{\hspace{6pt}} c@{\hspace{6pt}} c@{\hspace{6pt}}
                  c@{\hspace{6pt}} c@{\hspace{6pt}}}
	\toprule
	  Instance & \multicolumn{2}{c}{Size} 
    	       & \multicolumn{2}{c}{Total Variation}\\
  			   &  $|{N}|$ & $|{E}|$ & Standard Data & OD Data \\
  	\midrule
	\textbf{30\_ieee    } & 30 & 82&   $2.56570$ & $\bm{0.00118}$ \\
	\textbf{57\_ieee    } & 57&  160&  $11.5160$ & $\bm{0.00509}$ \\
	\textbf{89\_pegase  } & 89&  420&  $20.9309$ & $\bm{0.02538}$ \\
	\textbf{118\_ieee    }& 118& 372&  $40.2253$ & $\bm{0.01102}$ \\
	\textbf{300\_ieee    }& 300& 822&  $213.075$ & $\bm{0.13527}$ \\
	\bottomrule
	\end{tabular}
}
\caption{Standard vs OD training data: Total Variation.}
\label{tab:OPF_tv} 
\end{table}
Table~\ref{tab:OPF_tv} describes the power network benchmarks used,
including the number of buses $|{\cal N}|$, and transmission lines/transformers $|{\cal E}|$.
Additionally it presents a comparison of the total variation resulting 
from the two datasets. Note that the OD datasets have total variation which 
is orders of magnitude lower than their Standard counterparts.

\subsection{Network Architecture}
\label{app:opf_net}
The neural network architecture used to learn solutions to the OPF problem 
is a fully connected ReLU network composed of an input layer of size 
proportional to the number of loads 
in the power network. The architecture has 5 hidden layers, each of
size double  the number of loads in the power network, and a final layer
of size proportional to the number of generators in the network. 
The details of the learning models are reported in Table \ref{tab:OPF_params}.

\begin{table}[!h]
\centering
\begin{tabular}{l@{\hspace{6pt}} |c@{\hspace{6pt}}
                 || @{\hspace{6pt}}l@{\hspace{6pt}} |c@{\hspace{6pt}}
  } 
    \toprule
      Parameter & Value  &    Parameter & Value  \\
    \midrule
  	\textbf{Epochs}                & 20000  & \textbf{Batch Size}            &16\\
	\textbf{Learning rate}         & $[1e^{-5}, 1e^{-4}]$ & \textbf{Batch Normalization}   &True\\ 
	\textbf{Dual learning rate}    &$1e^{-4}$ & \textbf{Gradient Clipping}     &True\\ 
	\textbf{Hidden layers}         &5 & \textbf{Activation Function}   &LeakyReLU\\  
	\bottomrule
	\end{tabular}
\caption{OPF: Training Parameters}
\label{tab:OPF_params} 
\end{table}

\section{Additional Results}
\label{app:results}

\begin{table}[t]
\centering
{
\begin{tabular}{rl ll ll ll ll}
\toprule
Instance & \multicolumn{1}{c}{Size} 
         & \multicolumn{2}{c}{Prediction Error} 
         & \multicolumn{2}{c}{Constraint~Violation} 
         & \multicolumn{2}{c}{Optimality Gap (\%)}\\
\cmidrule(r){3-4} \cmidrule(r){5-6} \cmidrule(r){7-8}
  & No.~buses
  & Standard & OD 
  & Standard & OD 
  & Standard & OD \\
\midrule 
 IEEE-30   & 30  & $22.31$ & $\bm{0.11}$ & $0.063$& $\bm{0.00004}$ &  $6.28$ & $\bm{0.76}$ \\
 IEEE-57   & 57  & $83.61$ & $\bm{0.58}$ & $0.139$& $\bm{0.0002}$ &  $1.04$ & $\bm{0.66}$ \\
 Pegase-89 & 89  & $89.17$ & $\bm{2.78}$ & $1.353$ & $\bm{0.003}$ & $20.1$ & $\bm{0.83}$ \\
 IEEE-118  & 118 & $36.55$ & $\bm{0.54}$ & $1.330$ & $\bm{0.002}$ & $3.80$ & $\bm{0.36}$ \\
 IEEE-300  & 300 & $157.3$ & $\bm{2.27}$ & $1.891$ & $\bm{0.009}$ & $22.9$ & $\bm{0.12}$ \\
\bottomrule
\end{tabular}
}
  \caption{OPF -- Standard vs OD training data: prediction errors, constraint violations, and 
  optimality gap.}
    \label{tab:OPF_acc2}
\end{table}

Table \ref{tab:OPF_acc2} compares prediction errors and constraint violations
for the OD and Standard approach to data generation for the Optimal Power 
Flow problems. As expressed in the main paper, the results show that the models 
trained on the OD datset present 
predictions that are closer to their optimal target solutions (error expressed 
in MegaWatt (MW)), reduce the constraint violations (expressed as $L_1$-distance 
between the predictions and their projections), and improve the optimality gap, 
which is the relative difference in objectives between the predicted (feasible) 
solutions and the target ones.

\end{document}